\newfont{\teneufm}{eufm10}
\newfont{\seveneufm}{eufm7}
\newfont{\fiveeufm}{eufm5}
\def\bbbc{{\mathchoice {\setbox0=\hbox{$\displaystyle\rm C$}\hbox{\hbox
to0pt{\kern0.4\wd0\vrule height0.9\ht0\hss}\box0}}
{\setbox0=\hbox{$\textstyle\rm C$}\hbox{\hbox
to0pt{\kern0.4\wd0\vrule height0.9\ht0\hss}\box0}}
{\setbox0=\hbox{$\scriptstyle\rm C$}\hbox{\hbox
to0pt{\kern0.4\wd0\vrule height0.9\ht0\hss}\box0}}
{\setbox0=\hbox{$\scriptscriptstyle\rm C$}\hbox{\hbox
to0pt{\kern0.4\wd0\vrule height0.9\ht0\hss}\box0}}}}
\def\bbbq{{\mathchoice {\setbox0=\hbox{$\displaystyle\rm
Q$}\hbox{\raise
0.15\ht0\hbox to0pt{\kern0.4\wd0\vrule height0.8\ht0\hss}\box0}}
{\setbox0=\hbox{$\textstyle\rm Q$}\hbox{\raise
0.15\ht0\hbox to0pt{\kern0.4\wd0\vrule height0.8\ht0\hss}\box0}}
{\setbox0=\hbox{$\scriptstyle\rm Q$}\hbox{\raise
0.15\ht0\hbox to0pt{\kern0.4\wd0\vrule height0.7\ht0\hss}\box0}}
{\setbox0=\hbox{$\scriptscriptstyle\rm Q$}\hbox{\raise
0.15\ht0\hbox to0pt{\kern0.4\wd0\vrule height0.7\ht0\hss}\box0}}}}
\def\bbbt{{\mathchoice {\setbox0=\hbox{$\displaystyle\rm
T$}\hbox{\hbox to0pt{\kern0.3\wd0\vrule height0.9\ht0\hss}\box0}}
{\setbox0=\hbox{$\textstyle\rm T$}\hbox{\hbox
to0pt{\kern0.3\wd0\vrule height0.9\ht0\hss}\box0}}
{\setbox0=\hbox{$\scriptstyle\rm T$}\hbox{\hbox
to0pt{\kern0.3\wd0\vrule height0.9\ht0\hss}\box0}}
{\setbox0=\hbox{$\scriptscriptstyle\rm T$}\hbox{\hbox
to0pt{\kern0.3\wd0\vrule height0.9\ht0\hss}\box0}}}}
\def\bbbs{{\mathchoice
{\setbox0=\hbox{$\displaystyle     \rm S$}\hbox{\raise0.5\ht0\hbox
to0pt{\kern0.35\wd0\vrule height0.45\ht0\hss}\hbox
to0pt{\kern0.55\wd0\vrule height0.5\ht0\hss}\box0}}
{\setbox0=\hbox{$\textstyle        \rm S$}\hbox{\raise0.5\ht0\hbox
to0pt{\kern0.35\wd0\vrule height0.45\ht0\hss}\hbox
to0pt{\kern0.55\wd0\vrule height0.5\ht0\hss}\box0}}
{\setbox0=\hbox{$\scriptstyle      \rm S$}\hbox{\raise0.5\ht0\hbox
to0pt{\kern0.35\wd0\vrule height0.45\ht0\hss}\raise0.05\ht0\hbox
to0pt{\kern0.5\wd0\vrule height0.45\ht0\hss}\box0}}
{\setbox0=\hbox{$\scriptscriptstyle\rm S$}\hbox{\raise0.5\ht0\hbox
to0pt{\kern0.4\wd0\vrule height0.45\ht0\hss}\raise0.05\ht0\hbox
to0pt{\kern0.55\wd0\vrule height0.45\ht0\hss}\box0}}}}
\def\bbbz{{\mathchoice {\hbox{$\sf\textstyle Z\kern-0.4em Z$}}
{\hbox{$\sf\textstyle Z\kern-0.4em Z$}}
{\hbox{$\sf\scriptstyle Z\kern-0.3em Z$}}
{\hbox{$\sf\scriptscriptstyle Z\kern-0.2em Z$}}}}
\newtheorem{theorem}{Theorem}
\newtheorem{lemma}[theorem]{Lemma}
\def\cA{{\mathcal A}}
\def\cB{{\mathcal B}}
\def\cI{{\mathcal I}}
\def\cJ{{\mathcal J}}
\def\cS{{\mathcal S}}
\def\cZ{{\mathcal Z}}
\def\({\left(}
\def\){\right)}
\def\[{\left[}
\def\]{\right]}
\def\<{\langle}
\def\>{\rangle}
\def\fl#1{\left\lfloor#1\right\rfloor}
\def\F{\mathbb{F}}
\def\mand{\qquad\mbox{and}\qquad}
\newcommand{\set}[1]{\left\{#1\right\}}
\newcommand{\To}{\longrightarrow}
\newcommand{\D}{\mathcal{D}}
\newcommand{\E}{\mathbf{E}}
\newcommand{\LL}{\mathrm{L}}
\newcommand{\HH}{\mathrm{H}}
\newcommand{\W}{\mathrm{W}}
\newcommand{\BW}{\mathrm{BW}}
\newcommand{\JQ}{\mathrm{JQ}}
\newcommand{\JI}{\mathrm{JI}}
\newcommand{\GH}{\mathrm{GH}}
\newcommand{\bx} X
\newcommand{\by} Y
\newcommand{\bz} Z
\def\wX{\widetilde{X}}
\def\wY{\widetilde{Y}}
\def\wa{\widetilde{a}}
\def\wE{\widetilde{E}}
\def\bF{\overline{\F}}
\def\Tr{\mathrm{Tr}}
\def\JL{J_{\mathrm{L}}(q)}
\def\IL{I_{\mathrm{L}}(q)}
\def\JH{J_{\mathrm{H}}(q)}
\def\JJQ{J_{\mathrm{JQ}}(q)}
\def\JJI{J_{\mathrm{JI}}(q)}
\def\IH{I_{\mathrm{H}}(q)}
\def\IJQ{I_{\mathrm{JQ}}(q)}
\def\IJI{I_{\mathrm{JI}}(q)}
\def\JGH{J_{\mathrm{GH}}(q)}
\def\SJGH{\cJ_{\mathrm{GH}}}
\def\JGHv{\cJ_{\mathrm{GH}, v}}
\def\IGH{I_{\mathrm{GH}}(q)}
\newcommand{\keywords}[1]{{
      \list{}{\advance\topsep by -5ex \relax\small \leftmargin=1cm
      \labelwidth=0pt \listparindent=0pt \itemindent\listparindent
      \rightmargin\leftmargin}\item[\hskip\labelsep \bfseries
      Keywords:] {#1} \endlist}}
\begin{document}

\pagestyle {plain}
\pagenumbering{arabic}

\title{On the Number of Distinct Legendre, Jacobi and Hessian Curves}
\author{
         {\sc{Reza Rezaeian Farashahi}} \\
         {Department of Computing}\\
         {Macquarie University} \\
         {Sydney, NSW 2109, Australia} \\
         {\tt reza@ics.mq.edu.au}}

\date{}
\maketitle

\begin{abstract}
We give explicit formulas for the number of distinct elliptic curves over a finite field, up to
isomorphism, in the families of Legendre, Jacobi, Hessian and generalized Hessian curves.
\end{abstract}

\keywords{Elliptic curve, Legendre curve, Jacobi curve, Hessian curve, $j$-invariant, isomorphism,
cryptography}

\paragraph*{2000 Mathematics Subject Classification:} 11G05, 11T06, 14H52


\section{Introduction}
\label{sec intro}

A nonsingular absolutely irreducible projective curve of genus~1 defined over a field $\F$ with at
least one $\F$-rational point is called an elliptic curve over $\F$,  see~\cite{ACDFLNV,Silv} for a
general background on elliptic curves. Koblitz \cite{Ko} and Miller \cite{Mi} were the first to
show that the group of rational points on an elliptic curve over a finite field can be used for the
discrete logarithm problem in a public-key cryptosystem.

In particular, an elliptic curve $E$ over $\F$ can be given by the so-called {\it Weierstrass
equation\/}
\begin{equation}
\label{eq:Weier} E :\quad  \by^2 + a_1\bx\by + a_3\by = \bx^3 + a_2\bx^2 + a_4\bx + a_6,
\end{equation}
where the coefficients $a_1, a_2, a_3, a_4, a_6\in \F$, which traditionally has been most commonly
used. For a field $\F$ of characteristic $p\ne 2,3$, the Weierstrass equation \eqref{eq:Weier} can
be transformed to the so called {\it short Weierstrass equation\/} given by
\begin{equation}
\label{eq:sWeier} \E_{\W,u,v} :\quad  \by^2 = \bx^3 + u\bx + v,
\end{equation}
where the coefficients $u, v\in \F$ (see~\cite{ACDFLNV,Silv}).

There are many other forms of equations to represent elliptic curves such as Legendre equation,
Hessian equation, quartic equation and intersection of two quadratic surfaces~\cite[Chapter
2]{Wash}. Any equation given by each of the latter forms over a field $\F$ can be transformed to a
Weierstrass equation by change of variables that uses rational functions with coefficient in $\F$.
Usually, any elliptic curve over an algebraically closed field $\F$ can be defined by each of the
latter equations.

A {\it Legendre equation} is a variant of Weierstrass equation with one parameter. Any elliptic
curve defined over an algebraically closed field $\F$ of characteristic $p\ne 2$ can be expressed
by an elliptic curve by the Legendre equation
\begin{equation}
\label{eq:Leg} \E_{\mathrm{L},u}: \quad  Y^2=X(X-1)(X-u),
\end{equation}
for some $u \in \F$. Furthermore, an elliptic curve in Legendre form is birationally equivalent to
a {\it Jacobi quartic} curve that is given by the equation
\begin{equation}
\label{eq:JacQ} \E_{\JQ,u}: \quad  Y^2 =X^4+2uX^2+1.
\end{equation}
for some $u\in \F$ with $u\ne \pm1$. Moreover, an elliptic curve in Legendre form is birationally
equivalent to a so-called {\it Jacobi intersection} that is defined by the intersection of two
quadratics given by
\begin{equation}
\label{eq:JacI} \E_{\JI,u}: \quad  X^2+Y^2=1 \mand  uX^2+Z^2=1,
\end{equation}
where $u\in \F$ and $u\ne 0,1$. See~\cite{Cas} for more background on Jacobi curves. The latter two
forms over finite fields are used for cryptographic interest in~\cite{LiSm}. Also, for the recent
improvements on their arithmetic see~\cite{Du,HWCD2}.

A {\it Hessian curve} over a field $\F$ is given by the cubic equation
\begin{equation}
\label{eq:Hes} \E_{\HH,u}: \quad  X^3 +Y^3+1 = uXY,
\end{equation}
for some  $u \in \F$ with $u^3\ne 27$ (see~\cite{Cas}). For the cryptographic interests on Hessian
curves over finite fields see~\cite{BKL,FJ,HWCD2,JoQu,Sma}. Recently, Farashahi and Joye have
considered the generalization of Hessian curves to the so-called {\it generalized Hessian} family,
\cite{FJ}, given by
\begin{equation}
\label{eq:GH} \E_{\GH,u,v}: \quad  \bx^3 +\by^3 + v  = u\bx\by,
\end{equation}
where $u, v \in \F$,  $v\ne 0$ and $u^3\ne 27v$. Moreover, Bernstein, Kohel and Lange, \cite{BKL},
have also considered the \emph{twisted Hessian} form that is similar to the latter form up to the
order of the coordinates.

%

We note that, above families do not cover all distinct curves over finite fields. Accordingly, a
natural question arise about the number of isomorphism classes of these curves.

Lenstra~\cite{Le} gave explicit estimates for the number of isomorphism classes of elliptic curves
over a prime field $\F_p$ with order divisible by a prime $l\ne p$. After that, Howe~\cite{Ho}
extended Lenstra's work to arbitrary integers $l$. Moreover, Castryck and Hubrechts~\cite{CaHu}
generalized these results giving explicit estimates for the number of isomorphism classes of
elliptic curves over a finite field $\F_q$ having order with a fix remainder divided by an
integer~$l$.

Furthermore, Farashahi and Shparlinski \cite{FS}, using the notion of the {\it $j$-invariant of an
elliptic curve\/}, see~\cite{ACDFLNV,Silv,Wash}, gave exact formulas for the number of distinct
elliptic curves over a finite field (up to isomorphism over the algebraic closure of the ground
field) in the families of Edwards curves~\cite{Edw} and  their generalization due to Bernstein and
Lange~\cite{BerLan1} as well as the curves introduced by Doche, Icart and Kohel~\cite{DIK}.
Moreover, the open question of \cite{FS} is whether there are explicit formulas for the number of
distinct elliptic curves over a finite field in the families of Hessian curves, Jacobi quartic and
Jacobi intersections.

In this paper, we give precise formulas for the number of distinct $j$-invariants of elliptic
curves over a finite field in the families of Legendre, Jacobi and Hessian curves. The next
interesting and more challenging step is to study isomorphism classes over the ground field of
these families. Moreover, we give exact formulas for the number of isomorphism classes over the
ground field of above families.

Throughout the paper, for a field $\F$, we denote its algebraic closure by $\overline{\F}$ and its
multiplicative subgroup by $\F^*$. The letter $p$ always denotes a prime number and the letter $q$
always denotes a prime power. As usual, $\F_q$ is a finite field of size $q$. Let $\chi_2$ denote
the quadratic character in $\F_q$, where $p\ge 3$. So, for any $q$ where $p\ge 3$, $u=w^2$ for some
$w \in \F^*_q$ if and only if $\chi_2(u) =1$. The cardinality of a finite set $\cS$ is denoted by
$\#\cS$.

\section{Background on isomorphisms and outline of our approach}

\label{sec:iso}

An elliptic curve $E$ over $\F$ given by the Weierstrass equation~\eqref{eq:Weier} can be
transformed to the elliptic curve $\widetilde{E}$ over $\F$ given by the Weierstrass equation
$$
\widetilde{E} :\quad  \wY^2 + \wa_1\wX\wY + \wa_3\wY = \wX^3 + \wa_2\wX^2 + \wa_4\wX + \wa_6,
$$
via the invertible maps $\, X \mapsto \alpha^2\wX+ \beta$ and $\, Y \mapsto \alpha^3\wY+
\alpha^2\gamma \wX+ \delta $ with $\alpha, \beta, \gamma, \delta \in \bF$ and $\alpha \ne 0$.  In
this case, the elliptic curves $E$ and $\wE$ are called {\it isomorphic} over $\bF$ or {\it twists}
of each other. In case $\alpha, \beta, \gamma, \delta \in \F$, the elliptic curves $E$ and $\wE$
are called {\it isomorphic} over $\F$. We use $E \cong_{\F} \wE$ to denote $E$ and $\wE$ are
$\F$-{\it isomorphic}.

The elliptic curve $E$ over $\F$ given by the Weierstrass equation~\eqref{eq:Weier} has the
non-zero discriminant
$$\Delta_E=-b_2^2b_8 -8b_4^3-27b_6^2+ 9b_2b_4b_6,$$
\begin{equation*}
b_2 = a_1^2+ 4a_2,\ b_4 = a_1a_3 + 2a_4,\ b_6 = a_3^2+ 4a_6,\ b_8 = a_1^2a_6 - a_1a_3a_4 + 4a_2a_6
+ a_2a_3^2- a_4^2.
\end{equation*}
Also, its $j$-invariant is explicitly defined as
$$
j(E) = (b_2^2-24b_4)^3/\Delta_E.
$$
It is known that two elliptic curves $E, \wE$ over a field $\F$ are isomorphic over $\overline{\F}$
if and only if $j(E_1) = j(E_2)$, see~\cite[Proposition~III.1.4(b)]{Silv}.

Note that over a finite field $\F_q$ each of the $q$ values appears as $j$-invariant of at least
one curve. So, the number of distinct elliptic curves over~$\F_q$ (up to isomorphism over
$\overline{\F}_q$) is equal to $q$. The same is true for the family~\eqref{eq:sWeier}. Furthermore,
for a finite field $\F_q$ with characteristic $p\ne 2,3$, the number of $\F_q$-isomorphism classes
of the family~\eqref{eq:sWeier} is equal to $2q+6$, $2q+2$, $2q+4$, $2q$ if $q \equiv 1, 5, 7, 11
\pmod {12}$ respectively (e.g. see~\cite{Le}).

In the following, we use $\JL$, $\JJQ$, $\JJI$, $\JH$ and $\JGH$
to denote the number of distinct $j$-invariants of the curves defined over $\F_q$ in the
families~\eqref{eq:Leg}, \eqref{eq:JacQ}, \eqref{eq:JacI}, \eqref{eq:Hes} and~\eqref{eq:GH},
respectively.

Moreover, we use $\IL$, $\IJQ$, $\IJI$, $\IH$ and $\IGH$
to denote the number of $\F_q$-isomorphism classes of the families~\eqref{eq:Leg}, \eqref{eq:JacQ},
\eqref{eq:JacI}, \eqref{eq:Hes} and~\eqref{eq:GH}
respectively.

We compute the number of distinct $j$-invariants of a family of elliptic curves $\E_u$ over a
finite field $\F_q$ with a parameter $u$, using the general approach mentioned in~\cite{FS}. In
this approach, the $j$-invariant of $\E_u$ is given by a rational function $F(U)\in \F_q(U)$ of
small degree. Next, we consider the bivariate rational function
$$
F(U) - F(V) = {g(U,V)}/{l(U,V)}
$$
with two relatively prime polynomials $g$ and $l$. We factor $g(U,V)$. Then, studying the number of
distinct roots of the polynomials $g_u(V)=g(u,V)$, for $u\in \F_q$, provides the necessary
information, which is the cardinality of the set $\cJ_u$ of all curves $\E_v$ with
$j(\E_v)=j(\E_u)$. Then, for several small integers $k$, we count the number of elements $u$ of
$\F_q$ with $\#\cJ_u=k$. Therefore, we obtain the number of different sets $\cJ_u$, i.e., the
number of distinct $j(\E_u)$ in the family.

We propose an analogous approach to count the number of $\F_q$-isomorphism classes of elliptic
curves $\E_u$, for $u\in \F_q$. Considering the set $\cJ_u$, we study the set $\cI_u$ of curves
$\E_v$ which are $\F_q$-isomorphic to the curve $\E_u$. Then, counting the number of distinct
sets~$\cI_u$ provides our results.

\section{Legendre curves}

\label{subsec:Lpre}

We consider the curves $\E_{\LL,u}$ given by Legendre equation~\eqref{eq:Leg} over a finite
field~$\F_q$ with characteristic $p\ge 3$. We note that $u \ne 0,1$, since the curve $\E_{\LL,u}$
is nonsingular. The Legendre curve $\E_{\LL,u}$ over $\F_q$ of characteristic $p > 3$ is isomorphic
to the Weierstrass curve $\E_{\W,a_u,b_u}$ given by
\begin{equation}
\label{eq:LW} \by^2 = \bx^3 + a_u \bx + b_u,
\end{equation}
where
$$a_u=-\(u^2-u+1\)/3,\quad b_u=-(u+1)(u-2)(2u-1)/27.$$ The $j$-invariant of
$\E_{\LL,u}$ is given by $j(\E_{\LL,u}) = F(u)$ where
\begin{equation*}
F(U) = \frac{2^8(U^2-U+1)^3}{(U^2-U)^2}.
\end{equation*}

Here, we study the cardinality of preimages of $F(u)$, for all elements $u\in \F_q \setminus
\set{0,1}$, under the map $u \mapsto F(u)$. In particular, we see this map is~$6:1$, for almost
all~$u\in \F_q$.

We consider the bivariate rational function $ F(U) - F(V) = {g(U,V)}/{l(U,V)} $ with two relatively
prime polynomials $g$ and $l$. We see that
\begin{equation*}
\begin{aligned}
\!g(U,V )\!=\!2^8 (U\!-\!V)(U\!+\! V\!\! -\! 1)(UV\!\! -\! 1)(UV\!\! -\! V \!+\! 1)(UV \!\!-\! U
+\! 1)(UV\!\! -\! U\! -\! V).
\end{aligned}
\end{equation*}
Then, we need to study the number of roots of the polynomial $g_u(V)=g(u,V)$, for $u\in \F_q
\setminus \set{0,1}$. Moreover, for $u\in \F_q \setminus\set{0,1},$ we let
\begin{eqnarray*}
\cJ_{\LL,u}=\set{v~:~v \in \F_q, \ \ \E_{\LL,u} \cong_{\overline{\F}_q} \E_{\LL,v}}.
\end{eqnarray*}
We note that, for all $v\in \cJ_{\LL,u}$, the curves $\E_{\LL,u}$ and $\E_{\LL,v}$ have the same
$j$-invariants. But, these curves may not be isomorphic over $\F_q$. Next, for a fixed value $u\in
\F_q \setminus \set{0,1}$, we let
$$
\cI_{\LL,u}=\set{v~:~v \in \F_q, \ \E_{\LL,u} \cong_{\F_q} \E_{\LL,v}}.
$$
Clearly, for all $u \in \F_q\setminus\set{0,1}$, we have $\cI_{\LL,u} \subseteq \cJ_{\LL,u}$. We
see that $j(\E_{\LL,u})=j(\E_{\W,a_u,b_u})=0$ if and only if $a_u=0$. Furthermore, $j(\E_{\LL,u})=
1728$ if and only if $b_u=0$ (see Equation~\eqref{eq:LW}). Let
\begin{equation}\label{eq:BL}
\cB=\set {u: u\in \F_q, a_u b_u=0}.
\end{equation}
The following lemma gives the cardinality of $\cJ_{\LL,u}$, for all $u\in \F_q\setminus\set{0,1}$.

\begin{lemma}\label{lem:JuL}
For all  $u\in \F_q\setminus \set{0,1}$, we have
$$
\#\cJ_{\LL,u}= \left\{
\begin{array}{ll}
1, & \text{ if }  u = -1 \text{ and } p=3,\\
3, & \text{ if }  u \in \set{-1, 2, 2^{-1}} \text{ and } p>3,\\
2, & \text{ if } u^2-u+1=0 \text{ and } p>3,\\
6, & \text{ if } u \notin \cB.
\end{array}
\right.
$$
\end{lemma}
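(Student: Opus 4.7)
The plan is to identify $\cJ_{\LL,u}$ with the set of roots of the polynomial $g_u(V)=g(u,V)$ lying in $\F_q \setminus \{0,1\}$. Indeed, $v \in \cJ_{\LL,u}$ iff $F(u)=F(v)$; the denominator $l(U,V)$ of the difference $F(U)-F(V)$ is, up to a nonzero constant, $U^2(U-1)^2 V^2(V-1)^2$, which does not vanish on the relevant domain, so the condition $F(u)=F(v)$ reduces to $g(u,v)=0$.

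From the factorization of $g$ given just before the statement, the roots of $g_u(V)$ are the classical Legendre $S_3$-orbit
$$
\sigma(u)=\{u,\ 1-u,\ 1/u,\ 1/(1-u),\ (u-1)/u,\ u/(u-1)\}.
$$
A short inspection confirms that $\sigma(u) \subset \F_q \setminus \{0,1\}$ whenever $u \in \F_q \setminus \{0,1\}$. Hence $\#\cJ_{\LL,u}=\#\sigma(u)$, and the lemma reduces to determining the cardinality of this finite orbit.

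Equating pairs of elements of $\sigma(u)$ and clearing denominators (using $u \ne 0,1$) produces one of the polynomial conditions $2u-1=0$, $u^2-1=0$, $u^2-2u=0$, or $u^2-u+1=0$. The first three describe the locus $b_u=0$, i.e.\ $u \in \{-1,2,1/2\}$, and the last describes the locus $a_u=0$; together these give exactly $\cB$. In characteristic $p>3$ the two loci are disjoint since $u^2-u+1$ evaluates to $3$ at $u=-1,2$ and to $3/4$ at $u=1/2$. A direct check on each locus then yields orbit size $3$ on $\{-1,2,1/2\}$ and orbit size $2$ on the zero locus of $u^2-u+1$, while off $\cB$ all six expressions are pairwise distinct.

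The only real subtlety is characteristic $3$: there $u^2-u+1=(u+1)^2$, and the values $-1$, $2$, $1/2$ all coincide with $-1$ in $\F_3$, so both special loci collapse to the single point $u=-1$. Plugging in shows that every expression in $\sigma(-1)$ equals $-1$, giving $\#\cJ_{\LL,-1}=1$. For $u \in \F_q \setminus \{0,1,-1\}$ in characteristic $3$ (which forces $q \ge 9$), every coincidence condition listed above again reduces to $u=-1$, so $u \notin \cB$ and $\#\sigma(u)=6$. Assembling these four cases gives the stated formula.
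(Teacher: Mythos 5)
Your proof is correct and follows essentially the same route as the paper: both identify $\cJ_{\LL,u}$ with the root set of $g_u(V)$, namely the six-element orbit $\set{u, 1/u, 1-u, 1/(1-u), (u-1)/u, u/(u-1)}$, and then determine its cardinality by a case analysis on the coincidence conditions $u\in\set{-1,2,2^{-1}}$ and $u^2-u+1=0$, with the characteristic-$3$ collapse to $u=-1$ handled separately. Your write-up is if anything slightly more explicit than the paper's (checking that the denominator of $F(U)-F(V)$ does not vanish and enumerating the pairwise equalities), but no new idea is involved.
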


\begin{proof}
For a fixed value $u\in \F_q \setminus \set{0,1}$, let $g_u(V)=g(u,V)$ and
$$
\cZ_u=\set{v~:~v \in \F_q\setminus\set{0,1}, \ g_u(v)=0}.
$$ Then,
%
$$
\cZ_u=\set{ \textstyle u, \frac{1}{u}, 1-u, \frac{1}{1-u},  \frac{u-1}{u}, \frac{u}{u-1}}.
$$
We note that, for all $v\in \cJ_{\LL,u}$, the curves $\E_{\LL,u}$ and $\E_{\LL,v}$ have the same
$j$-invariants, so $\cJ_{\LL,u}=\cZ_u$.
Next, we consider several cases depending on the value of $u$ in $\F_q$.
\begin{itemize}
\item
If $u=-1$ and $p=3$, then $\cJ_{\LL,-1}=\set{-1}$.
\item
If $u \in \set{-1,2,2^{-1}}$ and $p>3$, then $\cJ_{\LL,u}=\set{-1, 2, 2^{-1}}$.
\item
If $u^2-u+1=0$ and $p>3$, then $\cJ_{\LL,u}=\set{u, \frac{1}{u}}$.
\item
If $u\ne -1$, $u\ne 2$,  $u\ne 2^{-1}$ and $u^2-u+1\ne 0$, then all 6 elements in $\cJ_{\LL,u}$ are
distinct.
\end{itemize}
So, the proof of the lemma ic complete.
\end{proof}

Let $\chi_2$ denote the quadratic character in $\F_q$, where $p\ge 3$. So, for any $q$ where $p\ge
3$, $u=w^2$ for some $w \in \F^*_q$ if and only if $\chi_2(u) =1$.

\begin{lemma}\label{lem:isoL}
For all elements $u,v \in \F_q\setminus \set{0,1}$, we have $\E_{\LL,u} \cong_{\F_q} \E_{\LL,v}$ if
and only if $u$, $v$ satisfy one of the following:

\begin{enumerate}
  \item $v=u,$
  \item $v=\frac{1}{u}$ and $\chi_2(u)=1$
  \item $v=1-u$ and $\chi_2(-1)=1$
  \item $v=\frac{1}{1-u}$ and $\chi_2(u-1)=1$
  \item $v=\frac{u-1}{u}$ and $\chi_2(-u)=1$
  \item $v=\frac{u}{u-1}$ and $\chi_2(1-u)=1$.
\end{enumerate}
\end{lemma}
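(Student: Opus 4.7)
The plan is to combine Lemma~\ref{lem:JuL} with an explicit description of the $\F_q$-isomorphisms between Legendre curves. Since an $\F_q$-isomorphism is in particular an $\overline{\F}_q$-isomorphism, Lemma~\ref{lem:JuL} immediately restricts $v$ to the six explicit candidates
\[
v \in \bigl\{u,\ 1/u,\ 1-u,\ 1/(1-u),\ (u-1)/u,\ u/(u-1)\bigr\},
\]
so the task reduces to deciding, for each candidate, whether the isomorphism descends to $\F_q$.

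Next I would pin down the form of any such isomorphism. Both sides are given by Weierstrass equations with $a_1 = a_3 = 0$. Applying the general change of variables $X = \alpha^2\wX+\beta$, $Y = \alpha^3\wY+\alpha^2\gamma\wX+\delta$, the coefficients of $\wX\wY$ and of $\wY$ on the transformed side are, up to nonzero scalars, $2\gamma$ and $2\delta$. Requiring these to vanish and using $p\ge 3$ forces $\gamma = \delta = 0$. Hence every isomorphism $\E_{\LL,u}\to\E_{\LL,v}$ has the form
\[
X = \alpha^2\wX + \beta,\qquad Y = \alpha^3\wY,
\]
and is defined over $\F_q$ precisely when $\alpha \in \F_q^*$ and $\beta \in \F_q$, equivalently when $\alpha^2 \in \F_q^*$ is a square, that is $\chi_2(\alpha^2) = 1$.

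Substituting this change of variables into $Y^2 = X(X-1)(X-u)$ and comparing with $\wY^2 = \wX(\wX-1)(\wX-v)$ shows that such a transformation exists over $\overline{\F}_q$ iff
\[
\bigl\{-\beta/\alpha^2,\ (1-\beta)/\alpha^2,\ (u-\beta)/\alpha^2\bigr\} = \{0,\ 1,\ v\}.
\]
Each of the six possible bijections between these two $3$-element sets determines $\beta$ and $\alpha^2$ uniquely and produces a definite value of $v$. A short direct calculation yields: $v=u$ with $\alpha^2 = 1$; $v=1/u$ with $\alpha^2 = u$; $v=1-u$ with $\alpha^2 = -1$; $v=(u-1)/u$ with $\alpha^2 = -u$; $v=1/(1-u)$ with $\alpha^2 = u-1$; $v=u/(u-1)$ with $\alpha^2 = 1-u$. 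In every case $\alpha^2 \neq 0$ (using $u \ne 0, 1$ and $p \ne 2$) and $\beta$ is automatically in $\F_q$, so the isomorphism descends to $\F_q$ iff the displayed value of $\alpha^2$ is a nonzero square in $\F_q$. Reading off the six resulting character conditions reproduces exactly the six items of the lemma.

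The only delicate point is the argument in the second step that $\gamma=\delta=0$ is forced; beyond this, the proof is a mechanical enumeration of the six pairings and a verification that the computed value of $\alpha^2$ matches the $\chi_2$ condition listed in each clause. No deeper obstacle arises, and the exceptional situations from Lemma~\ref{lem:JuL} (for example $u^2-u+1=0$) cause no trouble because several of the six candidate values of $v$ then coincide, while the corresponding character conditions remain consistent.
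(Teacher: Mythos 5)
Your proposal is correct and follows essentially the same route as the paper: the paper's proof likewise reduces the question to the existence of $\alpha,\beta\in\F_q$ with $\set{-\beta/\alpha^2,\,(1-\beta)/\alpha^2,\,(u-\beta)/\alpha^2}=\set{0,1,v}$ and then enumerates the same six cases with the same values of $\alpha^2$, reading off each $\chi_2$ condition. The only difference is that you spell out why $\gamma=\delta=0$ is forced (a step the paper leaves implicit), which is a sound and welcome elaboration rather than a different argument.
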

\begin{proof}
For $u,v \in \F_q\setminus \set{0,1}$, we have $\E_{\LL,u} \cong_{\F_q} \E_{\LL,v}$ if and only if
there exist elements $\alpha, \beta$ in $\F_q$, where $\alpha\ne 0$ and
$$\set{\frac{-\beta}{\alpha^2}, \frac{1-\beta}{\alpha^2}, \frac{u-\beta}{\alpha^2}}=\set{0,1,v}.$$
This is equivalent to one of the following cases:
\begin{itemize}
\item $\beta=0, \alpha^2=1$ and $v=u$,
\item $\beta=0, \alpha^2=u$ and $v=\frac{1}{u}$,
\item $\beta=1, \alpha^2=-1$ and $v=1-u$,
\item $\beta=1, \alpha^2=u-1$ and $v=\frac{1}{1-u}$,
\item $\beta=u, \alpha^2=-u$ and $v=\frac{u-1}{u}$,
\item $\beta=u, \alpha^2=1-u$ and $v=\frac{u}{u-1}$,
\end{itemize}
which concludes the proof.
\end{proof}

Furthermore, the following lemma gives the cardinality of $\cI_{\LL,u}$, for all $u\in
\F_q\setminus \set{0,1}$.

\begin{lemma}\label{lem:IuL}
For all  $u\in \F_q\setminus \set{0,1}$, we have
$$
\#\cI_{\LL,u}= \left\{
\begin{array}{ll}
1, & \text{ if }  u = -1 \text{ and } p=3,\\
3, & \text{ if }  u \in \set{-1, 2, 2^{-1}}, \ q \equiv 1,3,7  \pmod 8 \text{ and } p>3,\\
2, & \text{ if }  u \in\set{-1, 2}, \ q \equiv 5 \pmod  8,\\
1, & \text{ if }  u = 2^{-1}, \ q \equiv 5 \pmod 8,\\
2, & \text{ if }  u^2-u+1=0,\ q \equiv 1 \pmod {12}  \text{ and } p>3,\\
1, & \text{ if }  u^2-u+1=0,\ q \not\equiv 1 \pmod {12},\\
3, & \text{ if }  \chi_2(-1)=-1  \text{ and } u \notin \cB,\\
2, & \text{ if }  \chi_2(-1)=1, \chi_2(u)=\chi_2(1-u)=-1  \text{ and }\ u \notin \cB,\\
4, & \text{ if }  \chi_2(-1)=1, \chi_2(u)\chi_2(1-u)=-1  \text{ and }\ u \notin \cB,\\
6, & \text{ if }  \chi_2(-1)=1, \chi_2(u)=\chi_2(1-u)=1  \text{ and }\ u \notin \cB.\\
\end{array}
\right.
$$
\end{lemma}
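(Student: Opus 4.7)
The plan is to combine Lemma~\ref{lem:isoL}, which lists for each $v \in \cJ_{\LL,u}$ the quadratic-character condition required for $\E_{\LL,u} \cong_{\F_q} \E_{\LL,v}$, with Lemma~\ref{lem:JuL}, which records when the six rational expressions $u, \tfrac{1}{u}, 1-u, \tfrac{1}{1-u}, \tfrac{u-1}{u}, \tfrac{u}{u-1}$ collapse. Throughout I would repeatedly use the identities $\chi_2(-u) = \chi_2(-1)\chi_2(u)$ and $\chi_2(u-1) = \chi_2(-1)\chi_2(1-u)$, together with the standard facts that $\chi_2(-1) = 1 \iff q \equiv 1 \pmod 4$ and $\chi_2(2) = 1 \iff q \equiv \pm 1 \pmod 8$.

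The case $p = 3$, $u = -1$ is immediate since $\cJ_{\LL,-1} = \{-1\}$ already. For the generic case $u \notin \cB$ (so $u \ne -1, 2, 2^{-1}$ and $u^2 - u + 1 \ne 0$), the six candidates are distinct, and I would split on $\chi_2(-1)$. If $\chi_2(-1) = -1$, then $1-u \notin \cI_{\LL,u}$, while the paired conditions inside $\{\tfrac{1}{u}, \tfrac{u-1}{u}\}$ (respectively $\{\tfrac{1}{1-u}, \tfrac{u}{u-1}\}$) become complementary and pick out exactly one element each, giving $|\cI_{\LL,u}| = 3$. If $\chi_2(-1) = 1$, then $u$ and $1-u$ always lie in $\cI_{\LL,u}$, the pair $\{\tfrac{1}{u}, \tfrac{u-1}{u}\}$ is included in full when $\chi_2(u) = 1$, and the pair $\{\tfrac{1}{1-u}, \tfrac{u}{u-1}\}$ when $\chi_2(1-u) = 1$. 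Tabulating the four sign combinations of $(\chi_2(u), \chi_2(1-u))$ produces the sizes $6, 4, 4, 2$ of the remaining lines of the generic case.

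For the three special values $u \in \{-1, 2, 2^{-1}\}$ with $p > 3$, the orbit $\cJ_{\LL,u} = \{-1, 2, 2^{-1}\}$ has size three, and several of the six maps of Lemma~\ref{lem:isoL} produce the same value. For each choice of $u$ I would write out the six maps explicitly and take, for each candidate $v$, the union of the character conditions under which it is reached. The resulting conditions involve $\chi_2(-1)$, $\chi_2(2)$, $\chi_2(-2)$, and the key observation is that the only way a candidate fails is if $\chi_2(2) = \chi_2(-2) = -1$, which by multiplicativity is equivalent to $q \equiv 5 \pmod 8$; a short check then yields $|\cI_{\LL,-1}| = |\cI_{\LL,2}| = 2$ and $|\cI_{\LL,2^{-1}}| = 1$ in that regime, and size $3$ in all other residue classes, consistent with $\{-1, 2\}$ and $\{2^{-1}\}$ forming separate $\F_q$-isomorphism classes when $q \equiv 5 \pmod 8$. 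Finally, when $u^2 - u + 1 = 0$ (which forces $p > 3$ and $q \equiv 1 \pmod 3$), Lemma~\ref{lem:JuL} gives $\cJ_{\LL,u} = \{u, 1/u\}$; using the relations $1/u = 1-u = \tfrac{u}{u-1}$ among these maps, one sees that $1/u \in \cI_{\LL,u}$ iff $\chi_2(u) = 1$ or $\chi_2(-1) = 1$, and since $u$ is a primitive sixth root of unity both conditions reduce, under $q \equiv 1 \pmod 3$, to $q \equiv 1 \pmod{12}$.

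The main bookkeeping obstacle is the treatment of the three exceptional values $u \in \{-1, 2, 2^{-1}\}$: because several of the six maps of Lemma~\ref{lem:isoL} collide, each candidate $v$ may be reached from $u$ in several different ways, and one must verify that the \emph{disjunction} of the associated character conditions yields exactly the stated dependence on $q \bmod 8$. Once this is done, the remaining cases reduce to routine tabulation.
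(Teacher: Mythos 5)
Your proposal is correct and follows essentially the same route as the paper: it combines Lemma~\ref{lem:isoL} with the collapsing of the six maps recorded in Lemma~\ref{lem:JuL}, splits into the cases $u\notin\cB$, $u\in\set{-1,2,2^{-1}}$, and $u^2-u+1=0$, and uses multiplicativity of $\chi_2$ exactly as the paper does (the only cosmetic difference being that you evaluate $\chi_2(u)$ for a primitive sixth root of unity via $u^{(q-1)/2}$, while the paper writes $u=-\bigl(\frac{1-\zeta}{2}\bigr)^2$ to get $\chi_2(u)=\chi_2(-1)$). No gaps.
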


\begin{proof}
For $p=3$, we have $\cB=\set{-1}$. Then, from Lemma~\ref{lem:JuL}, we obtain
$$\cI_{\LL,-1}=\set{-1},\ \text{ if }  p=3.  $$

Next, we assume that $p>3$. From Lemma~\ref{lem:isoL} (part $5$), we see $\E_{\LL,-1}\cong_{\F_q}
\E_{\LL,2}$. Furthermore, $\E_{\LL,-1}\cong_{\F_q} \E_{\LL,2^{-1}}$ if and only if $\chi_2(2)=1$ or
$\chi_2(-2)=1$, which is equivalent to the cases where $q \equiv 1,3,7 \pmod 8$. From the proof of
Lemma~\ref{lem:JuL}, if $u \in \set{-1, 2, 2^{-1}}$, we have $\cJ_{\LL,u}=\set{-1,2,2^{-1}}$.
Therefore,
$$
\cI_{\LL,u}= \left\{
\begin{array}{ll}
\set{-1,2,2^{-1}}, & \text{ if }  u \in \set{-1, 2, 2^{-1}}, \ q \equiv 1,3,7 \pmod 8,\\
\set{-1,2}, & \text{ if }  u \in \set{-1, 2}, \ q \equiv 5 \pmod 8,\\
\set{2^{-1}}, & \text{ if }  u = 2^{-1}, \ q \equiv 5 \pmod 8.
\end{array}
\right.
$$
Now, we assume that $u\in \F_q$ with $u^2-u+1=0$. This happens if $\chi_2(-3)=1$. Then,
$u=\frac{1+\zeta}{2}$, where $\zeta$ is a square root of $-3$ in $\F_q$. Furthermore, $u$ can be
written as $u=-(\frac{1-\zeta}{2})^2$.  Then, from Lemma~\ref{lem:isoL}, we see that
$\E_{\LL,u}\cong_{\F_q} \E_{\LL,\frac{1}{u}}$ if and only if $\chi_2(-1)=1$. Moreover,
$\chi_2(-3)=\chi_2(-1)=1$ if and only if $q \equiv 1 \; (\text{mod } 12)$. From the proof of
Lemma~\ref{lem:JuL}, we have $\cJ_{\LL,u}=\set{u,\frac{1}{u}}$. Hence,
$$
\cI_{\LL,u}= \left\{
\begin{array}{ll}
\set{u, \frac{1}{u}}, & \text{ if }  u^2-u+1=0,\ q \equiv 1 \pmod {12},\\
\set{u},              & \text{ if }  u^2-u+1=0,\ q \not\equiv 1 \pmod {12}.\\
\end{array}
\right.
$$

From now on, we let $u\in \F_q\setminus \set{0,1}$ with $u \notin \cB$. By the the proof of
Lemma~\ref{lem:JuL}, we have $$\cJ_{\LL,u}=\set{u, \frac{1}{u}, 1-u, \frac{1}{1-u}, \frac{u-1}{u},
\frac{u}{u-1}}.$$ We distinguish the following cases.
\begin{itemize}
\item
First, we assume that $\chi_2(-1)=-1$. From Lemma~\ref{lem:isoL}, we see that exactly one of
$\frac{1}{u}$ and $\frac{u-1}{u}$ belongs to $\cI_{\LL,u}$. Also, exactly one of $\frac{1}{1-u}$
and $\frac{u}{u-1}$ belongs to $\cI_{\LL,u}$. Furthermore, $1-u \notin \cI_{\LL,u}$. Therefore, if
$\chi_2(-1)=-1$, we obtain $\#\cI_{\LL,u}=3$.
\item
Now, we assume that $\chi_2(-1)=1$. So, $1-u\in \cI_{\LL,u}$.
\begin{itemize}
  \item
  If $\chi_2(u)=\chi_2(1-u)=-1$, then $\chi_2(-u)=\chi_2(u-1)=-1$. Next, from~Lemma~\ref{lem:isoL}, we have $\#\cI_{\LL,u}=2$.
  \item
  If $\chi_2(u)\chi_2(1-u)=-1$, then $\chi_2(-u)\chi_2(u-1)=-1$. So, we have $\cI_{\LL,u}=\set{1-u, \frac{1}{u}, \frac{u-1}{u}}$ if $\chi_2(u)=1$
  and  $\cI_{\LL,u}=\set{1-u, \frac{1}{1-u},\frac{u}{u-1}}$ if $\chi_2(u)=-1$. Thus, $\#\cI_{\LL,u}=4$.
  \item
  If $\chi_2(u)=\chi_2(1-u)=1$, then $\chi_2(-u)\chi_2(u-1)=1$. Hence, $\cI_{\LL,u}=\cJ_{\LL,u}$ (see Lemma~\ref{lem:isoL}).
\end{itemize}
\end{itemize}
So, the proof of this lemma is complete.
\end{proof}

The following Lemma is used in the proof of Theorem~\ref{thm:numi Leg}, which can be of independent
interest.

\begin{lemma}
\label{lem:Lchi} Let $\chi_2$ be the quadratic character of a finite field $\F_q$ of characteristic
$p\ne 2$. For $i,j \in \set{-1,1},$ let
$$S_{i,j}=\set{u: u\in \F_q, \chi_2(u)=i, \chi_2(1-u)=j }.$$
We have
$$\#S_{i,j}=
\left\{
\begin{array}{ll}
(q-5)/4,&   \text{ if }  (i,j) = (1,1) \text{ and } q \equiv 1 \pmod {4}, \vspace{2pt}\\
(q-1)/4,&   \text{ if }  (i,j) \ne (1,1) \text{ and } q \equiv 1 \pmod {4}, \vspace{2pt}\\
(q-3)/4,&   \text{ if }  (i,j)\ne (-1,-1) \text{ and } q \equiv 3 \pmod {4}, \vspace{2pt}\\
(q+1)/4,&  \text{ if }  (i,j)=(-1,-1) \text{ and } q \equiv 3 \pmod {4}.\\
\end{array}
\right.
$$
\end{lemma}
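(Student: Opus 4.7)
The natural approach is to rewrite the counts as character sums. For $u \in \F_q \setminus \{0,1\}$ both $\chi_2(u)$ and $\chi_2(1-u)$ take values in $\{-1,1\}$, so for $i,j \in \{-1,1\}$ the usual indicator identity gives
\begin{equation*}
\#S_{i,j} \;=\; \sum_{u \in \F_q \setminus \{0,1\}} \frac{1 + i\chi_2(u)}{2}\cdot\frac{1 + j\chi_2(1-u)}{2}.
\end{equation*}
Expanding, the problem reduces to evaluating the four sums
\begin{equation*}
T_0=\!\!\sum_{u\ne 0,1}\!\!1,\quad T_1=\!\!\sum_{u\ne 0,1}\!\!\chi_2(u),\quad T_2=\!\!\sum_{u\ne 0,1}\!\!\chi_2(1-u),\quad T_3=\!\!\sum_{u\ne 0,1}\!\!\chi_2(u(1-u)),
\end{equation*}
after which $4\,\#S_{i,j} = T_0 + iT_1 + jT_2 + ijT_3$.

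The first three are immediate: $T_0 = q-2$, and since $\sum_{u\in \F_q}\chi_2(u) = 0$ and $\chi_2(0)=0$, one gets $T_1 = -\chi_2(1) = -1$, and $T_2 = -1$ by the substitution $u \mapsto 1-u$.

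The key step is $T_3$. Since $u(1-u)$ vanishes exactly at the excluded points $u=0,1$, we have $T_3 = \sum_{u \in \F_q}\chi_2(-u^2+u)$, which is a standard quadratic character sum. Using the well-known evaluation $\sum_{u \in \F_q}\chi_2(au^2+bu+c) = -\chi_2(a)$ whenever $a\ne 0$ and the discriminant $b^2 - 4ac \ne 0$ (applied with $a=-1$, $b=1$, $c=0$, discriminant $1$), this yields $T_3 = -\chi_2(-1)$. This is the only nontrivial computation; I expect it to be the main (but routine) obstacle.

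Combining, $4\,\#S_{i,j} = q - 2 - i - j - ij\,\chi_2(-1)$. Splitting on $q \bmod 4$ (so that $\chi_2(-1) = 1$ or $-1$) and checking the four sign patterns $(i,j)$ in each case recovers all four formulas stated in the lemma: in the case $q \equiv 1 \pmod 4$ one obtains $(q-5)/4$ precisely when $i=j=1$ and $(q-1)/4$ otherwise, while in the case $q \equiv 3 \pmod 4$ one obtains $(q+1)/4$ precisely when $i=j=-1$ and $(q-3)/4$ otherwise.
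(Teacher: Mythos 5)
Your proposal is correct, and every step checks out: the orthogonality expansion gives $4\,\#S_{i,j}=T_0+iT_1+jT_2+ijT_3$ with $T_0=q-2$, $T_1=T_2=-1$, and $T_3=-\chi_2(-1)$ by the standard evaluation of $\sum_{u}\chi_2(au^2+bu+c)$ with nonzero discriminant; the resulting $4\,\#S_{i,j}=q-2-i-j-ij\,\chi_2(-1)$ reproduces all four cases of the lemma. However, your route differs from the paper's. The paper argues geometrically: it fixes $\alpha,\beta$ with $\chi_2(\alpha)=i$, $\chi_2(\beta)=j$, counts the affine points of the conic $C_{i,j}:\alpha X^2+\beta Y^2=1$ (which has $q-\chi_2(-\alpha\beta)$ points, hence $q\mp1$ according to whether $\chi_2(-1)=ij$ or not), deletes the points with $xy=0$, and observes that $(x,y)\mapsto \alpha x^2$ is a surjective $4:1$ map onto $S_{i,j}$, so $\#S_{i,j}=\#T_{i,j}/4$. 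The two arguments are close relatives --- the conic point count is itself usually established by a character sum --- but yours is a purely analytic computation that isolates the single nontrivial input ($T_3=-\chi_2(-1)$) and handles all four sign patterns uniformly in one formula, whereas the paper's covering-map argument makes the $4:1$ structure and the boundary corrections (the deleted points with $xy=0$, which account for the $-5,-1,-3,+1$ shifts) geometrically visible. Either proof is complete and elementary; yours is arguably the more streamlined of the two.
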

\begin{proof}
Consider the set $S_{i,j}$, for fixed $i,j$ in $\set{-1,1}$. Let $C_{i,j}$ be an affine conic over
$\F_q$ given by the equation
$$C_{i,j}: \alpha X^2+ \beta Y^2 =1, $$
where $\chi_2(\alpha)=i$, $\chi_2(\beta)=j$. Let $C_{i,j}(\F_q)$ be the set of affine
$\F_q$-rational points on $C_{i,j}$. We note that, $\#C_{i,j}(\F_q)=q-1$ if $\chi_2(-1)=ij$ and
$\#C_{i,j}(\F_q)=q+1$ if $\chi_2(-1)=-ij$. Let $T_{i,j}= C_{i,j}(\F_q) \setminus \set{(x,y) \in
C_{i,j}(\F_q)~:~xy=0}$. Then, we see
$$\#T_{i,j}=
\left\{
\begin{array}{ll}
q-5, &  \text{ if }  \chi_2(-1)=1, \ i=j=1, \  \\
q-1, &  \text{ if }  \chi_2(-1)=1, \ i=-1  \text{ or } j=-1,  \\
q-3, &  \text{ if }  \chi_2(-1)=-1, \ i=1  \text{ or } j=1,  \\
q+1, &  \text{ if }  \chi_2(-1)=-1, \ i=j=-1. \\
\end{array}
\right.
$$
We note that $\chi_2(-1)=1$ if and only if $q \equiv 1 \pmod {4}$. Next, we consider the map $\tau
: T_{i,j} \To S_{i,j}$ given by $x \mapsto \alpha x^2 $. The map $\tau$ is surjective. Moreover, it
is a $4 : 1$ map. So, $\#S_{i,j}=\frac{\#T_{i,j}}{4}$, which completes the proof of this lemma.
\end{proof}

In the following, we give a precise formula for the number of distinct curves over $\F_q$, up to
isomorphism classes over $\overline{\F}_q$, of the family~\eqref{eq:Leg}.

\begin{theorem}
\label{thm:numj Leg} For any prime $p\ge 3$, for the number $\JL$ of distinct values of the
$j$-invariant of the family~\eqref{eq:Leg}, we have
$$
\JL=\displaystyle \fl{\frac{q+5}{6}}.
$$
\end{theorem}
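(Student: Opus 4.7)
The plan is to count $\JL$ by partitioning the parameter set $\F_q\setminus\set{0,1}$ into equivalence classes under $u\sim v\iff\E_{\LL,u}\cong_{\overline{\F}_q}\E_{\LL,v}$, so that $\JL$ is exactly the number of classes. By Lemma~\ref{lem:JuL} each class (which coincides with $\cJ_{\LL,u}$) has size $1$, $2$, $3$, or $6$, depending only on where $u$ sits. Writing $n_k$ for the number of classes of size $k$, we have
$$\JL=n_1+n_2+n_3+n_6,\qquad n_1+2n_2+3n_3+6n_6=q-2.$$
First I would identify the small classes using Lemma~\ref{lem:JuL}: the size-$3$ orbit, when it exists, is exactly $\set{-1,2,2^{-1}}$ and is present iff $p>3$; the size-$2$ orbit, when it exists, consists of the two roots of $U^2-U+1$, which lie in $\F_q$ iff $p>3$ and $q\equiv 1\pmod 3$ (its discriminant is $-3$ and its roots are primitive sixth roots of unity); and the size-$1$ orbit $\set{-1}$ appears iff $p=3$. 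A brief check (e.g.\ $(-1)^2-(-1)+1=3$) shows that in characteristic $p>3$ the three-element set $\set{-1,2,2^{-1}}$ is disjoint from the root set of $U^2-U+1$, so no double counting occurs; and in characteristic $3$ both of these exceptional sets collapse into $\set{-1}$, consistently with the first clause of Lemma~\ref{lem:JuL}.

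Next I would run a case analysis on $q\pmod 6$; since $p\ge 3$ makes $q$ odd, only the residues $1,3,5$ occur. In each case I read off $(n_1,n_2,n_3)$, solve for $n_6$ from the balance equation, and sum:
\begin{itemize}
\item $q\equiv 1\pmod 6$: $p>3$ and $U^2-U+1$ splits, so $(n_1,n_2,n_3)=(0,1,1)$ and $n_6=(q-7)/6$, giving $\JL=2+(q-7)/6=(q+5)/6$.
\item $q\equiv 5\pmod 6$: $p>3$ but $U^2-U+1$ has no roots in $\F_q$, so $(n_1,n_2,n_3)=(0,0,1)$ and $n_6=(q-5)/6$, giving $\JL=1+(q-5)/6=(q+1)/6$.
\item $q\equiv 3\pmod 6$: $p=3$, so $(n_1,n_2,n_3)=(1,0,0)$ and $n_6=(q-3)/6$, giving $\JL=1+(q-3)/6=(q+3)/6$.
\end{itemize}

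Finally I would verify these three values coincide with $\fl{(q+5)/6}$. Computing the residue of $q+5$ modulo $6$ gives $0,4,2$ in the three cases above, whence $\fl{(q+5)/6}$ equals $(q+5)/6$, $(q+1)/6$, $(q+3)/6$ respectively, matching the expressions obtained. The argument is essentially a piece of bookkeeping driven by Lemma~\ref{lem:JuL}, and I do not foresee real obstacles; the only points requiring any care are (i) confirming that the two kinds of exceptional orbit in characteristic $p>3$ are disjoint, and (ii) observing that in characteristic $3$ the three exceptional cases of Lemma~\ref{lem:JuL} degenerate into the single fixed point $u=-1$.
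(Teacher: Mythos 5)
Your proposal is correct and follows essentially the same route as the paper: both reduce the count to the fiber sizes given by Lemma~\ref{lem:JuL} and do the same bookkeeping (the paper writes $\JL=\sum_k N_k/k$ with $N_k$ counting parameters, while you count classes $n_k=N_k/k$ directly, which is the identical computation), with the same case split on $p=3$ versus $p>3$ and on whether $U^2-U+1$ splits, i.e.\ on $q\bmod 6$. Your added checks that the exceptional orbits are disjoint for $p>3$ and collapse to $\set{-1}$ for $p=3$ are sound and match the paper's implicit use of these facts.
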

\begin{proof}
We note that
$$\JL=\sum_{u \in \F_q \setminus \set{0,1}} \frac{1}{\#\cJ_{\LL,u}}.$$
Let $$N_k= \#\set{u: u \in \F_q\setminus \set{0,1} , ~\#\cJ_{\LL,u}=k}, \qquad k = 1, 2, \ldots .
$$
From Lemma~\ref{lem:JuL}, we see that $N_k=0$ for $k > 6$. Therefore,
\begin{equation}
\label{eq:JL and Nk} \JL=\sum_{k=1}^6 \frac{N_k}{k}.
\end{equation}

First, we let $p=3$. Form Lemma~\ref{lem:JuL}, we have $N_1=1$, $N_2=N_3=N_4=N_5=0$ and $N_6=q-3$.
Then, using~\eqref{eq:JL and Nk}, we obtain
$$\JL=(q+3)/6.$$

Now, we assume that $p>3$. From Lemma~\ref{lem:JuL}, we have $N_1=N_4=N_5=0$ and $N_3=3$.
Furthermore, $N_2$ is the number of roots of the polynomial $U^2-U+1$ in~$\F_q$. Since, $-3$, the
discriminant of this polynomial, is a quadratic residue in~$\F_q$ if and only if $q \equiv 1 \pmod
{3}$, we have $N_2=2$ if $q \equiv 1 \pmod {3}$ and $N_2=0$ if $q \equiv 2 \pmod {3}$. Because
$\sum_{k=1}^6 N_k=q-2$, we obtain $N_6=q-5-N_2$. Next, using~\eqref{eq:JL and Nk}, we have
$$\JL=
\left\{
\begin{array}{ll}
(q+5)/6,&   \text{ if }  q \equiv 1 \pmod {3}, \vspace{2pt}\\
(q+1)/6, &  \text{ if }  q \equiv 2 \pmod {3}.
\end{array}
\right.
$$
\end{proof}

%
%

Now, we give an exact formula for the number of $\F_q$-isomorphism classes of elliptic curves
over~$\F_q$ of the family~\eqref{eq:Leg}.

\begin{theorem}
\label{thm:numi Leg} For any prime $p\ge 3$, for the number $\IL$ of $\F_q$-isomorphism classes of
the family~\eqref{eq:Leg}, we have
$$\IL=
\left\{
\begin{array}{ll}
\displaystyle{\fl{\frac{7q+29}{24}}}&   \text{ if }  q \equiv 1 \pmod {12}, \vspace{2pt}\\
\displaystyle{\fl{\frac{q+2}{3}}} &  \text{ if }  q \equiv 3, 7 \pmod {12}, \vspace{2pt}\\
\displaystyle{\fl{\frac{7q+13}{24}}} &  \text{ if }  q \equiv 5,9 \pmod {12}, \vspace{2pt}\\
\displaystyle{ \;\; \frac{q-2}{3}} &  \text{ if }  q \equiv 11 \pmod {12}. \vspace{2pt}\\
\end{array}
\right.
$$
\end{theorem}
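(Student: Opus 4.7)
The plan is to invoke the identity
$$\IL \;=\; \sum_{u \in \F_q \setminus \set{0,1}} \frac{1}{\#\cI_{\LL,u}} \;=\; M_1 + \frac{M_2}{2} + \frac{M_3}{3} + \frac{M_4}{4} + \frac{M_6}{6},$$
where $M_k = \#\set{u \in \F_q \setminus \set{0,1} : \#\cI_{\LL,u} = k}$, and to read the values $M_k$ off from Lemma~\ref{lem:IuL} (no value $k=5$ occurs). The elements of $\F_q \setminus \set{0,1}$ split into the exceptional set $\cB \setminus \set{0,1}$ and its complement, which is in turn partitioned by $(\chi_2(u),\chi_2(1-u))$; the size of each cell is supplied by Lemma~\ref{lem:Lchi}.

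First I would handle $\cB \setminus \set{0,1}$ directly. In characteristic~$3$ this set is just $\set{-1}$ and contributes~$1$. In characteristic $p > 3$ it consists of $\set{-1, 2, 2^{-1}}$ together with the two roots of $U^2 - U + 1$, the latter existing exactly when $q \equiv 1 \pmod 3$ (so for $q \equiv 1$ or $7 \pmod{12}$). Using Lemma~\ref{lem:IuL}, $\set{-1, 2, 2^{-1}}$ contributes $1$ when $q \equiv 1,3,7 \pmod 8$ and $2$ when $q \equiv 5 \pmod 8$, while the pair of sixth-root-of-unity solutions contributes $1$ when $q \equiv 1 \pmod{12}$ and $2$ when $q \equiv 7 \pmod{12}$.

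For $u \notin \cB$ the quantity $\#\cI_{\LL,u}$ depends only on $(\chi_2(-1), \chi_2(u), \chi_2(1-u))$, so it is constant on each cell $S_{i,j}$ of Lemma~\ref{lem:Lchi}. To pass from the raw counts $\#S_{i,j}$ to the number of \emph{generic} $u$ in each cell I need the character data of every exceptional element: the set $\set{-1, 2, 2^{-1}}$ is distributed among the cells according to $\chi_2(-1)$ and $\chi_2(2)$, while for each root $u$ of $U^2 - U + 1$ the representation $u = -((1-\zeta)/2)^2$ together with the relation $u(1-u) = 1$ forces $\chi_2(u) = \chi_2(1-u) = \chi_2(-1)$, placing both roots in $S_{\chi_2(-1),\chi_2(-1)}$. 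Summing $1/\#\cI_{\LL,u}$ over the adjusted cells using Lemmas~\ref{lem:IuL} and~\ref{lem:Lchi} gives the generic contribution as a rational function of~$q$.

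Combining the exceptional and generic contributions yields, in each residue class of $q \pmod{12}$, a closed-form expression that may also depend on $q \pmod 8$. The main obstacle is the bookkeeping at this stage: within $q \equiv 1 \pmod{12}$ the two sub-cases $q \equiv 1 \pmod 8$ and $q \equiv 5 \pmod 8$ produce answers that differ by exactly $1/2$, and similarly within $q \equiv 5 \pmod{12}$ (where the auxiliary characteristic-$3$ case $q \equiv 9 \pmod{12}$ must be absorbed), so one must verify that each pair of sub-cases collapses onto the single floor expression $\fl{(7q+29)/24}$ or $\fl{(7q+13)/24}$ stated in the theorem. The classes $q \equiv 3, 7 \pmod{12}$ and $q \equiv 11 \pmod{12}$ are easier because in all three we have $\chi_2(-1) = -1$, forcing $\#\cI_{\LL,u} = 3$ throughout the generic set, and the arithmetic then produces an exact (rather than floor-valued) formula.
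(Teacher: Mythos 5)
Your proposal is correct and follows essentially the same route as the paper: the paper likewise writes $\IL=\sum_k M_k/k$, splits $\F_q\setminus\{0,1\}$ into the exceptional set $\cB=\cB_1\cup\cB_2$ and a generic part $\cA$ partitioned into cells by $(\chi_2(u),\chi_2(1-u))$ via Lemma~\ref{lem:Lchi}, adjusts the cell counts using exactly the facts you cite ($\chi_2(2)=1$ iff $q\equiv\pm1\pmod 8$, and $\chi_2(u)=\chi_2(1-u)=\chi_2(-1)$ for roots of $U^2-U+1$), and then tabulates $M_k$ modulo $24$ before collapsing the sub-cases into the stated floor expressions. Your observations about the half-integer discrepancies between the mod-$8$ sub-cases and about the uniform value $\#\cI_{\LL,u}=3$ when $\chi_2(-1)=-1$ match the paper's Tables and final case analysis.
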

\begin{proof}
We note that $$\IL=\sum_{u \in \F_q \setminus \set{0,1}} \frac{1}{\#\cI_{\LL,u}}.$$ From
Lemma~\ref{lem:IuL}, we see that $1 \le \#\cI_{\LL,u} \le 6$.  Let
$$M_k= \#\set{u: u \in \F_q\setminus \set{0,1}, ~\#\cI_{\LL,u}=k},
\qquad k = 1, 2, \ldots, 6.
$$
Then,
\begin{equation}
\label{eq:ILMk} \IL=\sum_{k=1}^6 \frac{M_k}{k}.
\end{equation}

We partition $\F_q\setminus \set{0, 1}$ into the following sets:
$$\cA \cup  \cB,  $$
where as before $\cB$ is given by~\eqref{eq:BL} and
$$\cA=\set{u \in \F_q\setminus \set{0, 1} ~: u\ne -1,2,2^{-1}, u^2-u+1\ne 0 }.$$
Moreover, we write $$\cB= \cB_1 \cup  \cB_2,$$ where $$\cB_1=\set{u \in \F_q ~:~  u=-1,2,2^{-1}}
\mand  \cB_2=\set{u \in \F_q ~:~ u^2-u+1 = 0 }.$$ We note that the sets $\cB_1$ and $\cB_2$ are not
necessarily disjoint. In Table~\ref{Tab:LAB}, we show the cardinalities of above sets, where we let
$q \equiv r \pmod {3}$. For the case $q \equiv 0 \; (\text{mod } 3)$, we have
$\cB_1=\cB_2=\set{2}$.

\begin{table*}[ht]
\begin{center}
$\begin{array}{|c|c|c|c|c|c|} \hline \quad r\quad  & \;\#\cA \;& \; \#\cB_1\; & \;
\#\cB_2 \; \\
\hline
\hline \phantom{\int_{x_{x_1}}^{x^2}}\hspace{-20pt}  0  & \; q-3 \;& \; 1 \;&  \; 1 \; \\
\hline \phantom{\int_{x_{x_1}}^{x^2}}\hspace{-20pt}  1  & \; q-7 \;& \; 3 \;&  \; 2 \;\\
\hline \phantom{\int_{x_{x_1}}^{x^2}}\hspace{-20pt}  2  & \; q-5 \;& \; 3 \;&  \; 0 \;\\
   \hline
\end{array}$
\end{center}
\vspace{-1mm}
   \caption{Cardinalities of the sets $\cA, \cB_1, \cB_2$.}  \label{Tab:LAB}
\end{table*}

If $q \equiv 1 \; (\text{mod } 4)$, then $\chi_2(-1)=1$. In this case, we partition $\cA$ into the
following set
 $$\cA_1 \cup \cA_2 \cup \cA_3, $$
 where
\begin{eqnarray*}
\cA_1&=&\set{u \in \F_q\setminus \set{0, 1} ~:~u \notin \cB, \
\chi_2(u) = \chi_2(1-u)=-1 },\\
\cA_2&=&\set{u \in \F_q\setminus \set{0, 1}~:~u \notin \cB, \
\chi_2(u) \chi_2(1-u)=-1 },\\
\cA_3&=&\set{u \in \F_q\setminus \set{0, 1}~:~u \notin \cB, \
\chi_2(u) = \chi_2(1-u)=1 }.\\
\end{eqnarray*}
We note that, $\chi_2(2)=\chi_2(2^{-1})=1$ if and only if $q \equiv \pm 1 \pmod {8}$. Furthermore,
for all $u\in \cB_2$, we have $\chi_2(u)=\chi_2(1-u)=\chi_2(-1)$; see the proof
of~Lemma~\ref{lem:IuL}. Next, from Lemma~\ref{lem:Lchi}, we compute the the cardinalities of the
sets $\cA_1$, $\cA_2$, $\cA_3$, where $q \equiv 1 \; (\text{mod } 4)$; see Table~\ref{Tab:LAi},
where we let $q \equiv r \pmod {24}$.

\begin{table*}[ht]
\begin{center}
$\begin{array}{|c|c|c|c|c|c|}
\hline \quad r\quad  & \;\#\cA_1 \;& \; \#\cA_2\; & \;\#\cA_3 \; \\
\hline
\hline \phantom{\int_{x_{x_1}}^{x^2}}\hspace{-20pt} 1  & \; \frac{q-1}{4} \;& \;\frac{q-1}{2} \;&  \;\frac{q-5}{4}-5\; \\
\hline \phantom{\int_{x_{x_1}}^{x^2}}\hspace{-20pt} 5  & \; \frac{q-1}{4}-1 \;& \;\frac{q-1}{2}-2 \;&  \;\frac{q-5}{4}\; \\
\hline \phantom{\int_{x_{x_1}}^{x^2}}\hspace{-20pt} 9  & \; \frac{q-1}{4} \;& \;\frac{q-1}{2} \;&  \;\frac{q-5}{4}-1\; \\
\hline \phantom{\int_{x_{x_1}}^{x^2}}\hspace{-20pt} 13  & \; \frac{q-1}{4}-1 \;& \;\frac{q-1}{2}-2 \;&  \;\frac{q-5}{4}-2\; \\
\hline \phantom{\int_{x_{x_1}}^{x^2}}\hspace{-20pt} 17  & \; \frac{q-1}{4} \;& \;\frac{q-1}{2} \;&  \;\frac{q-5}{4}-3\; \\
\hline
\end{array}$
\end{center}
\vspace{-1mm}
   \caption{Cardinalities of the sets $\cA_1, \cA_2, \cA_3$, for $q \equiv 1 \pmod
{4}$}  \label{Tab:LAi}
\end{table*}

From Lemma~\ref{lem:IuL}, for all $u\in \F_q\setminus\set{0,1}$, we have
\begin{equation}
\label{equ:IL Ru} \#\cI_{\LL,u}= \left\{
\begin{array}{ll}
1, & \text{ if }  u = -1 \text{ and } p=3, \\
1, & \text{ if }  u = 2^{-1}, \ q \equiv 5 \pmod 8, \\
1, & \text{ if }  u\in \cB_2, \ q \equiv 7 \pmod {12},\\
2, & \text{ if }  u = -1, 2, \ q \equiv 5 \pmod 8,\\
2, & \text{ if }  u \in \cB_2,\ q \equiv 1 \pmod {12}  \text{ and } p>3,\\
2, & \text{ if }  u\in \cA_1 \text{ and } q \equiv 1 \pmod 4,\\
3, & \text{ if }  u = -1, 2, 2^{-1}, \ q \equiv 1,3,7 \pmod 8 \text{ and } p>3,\\
3, & \text{ if }  u\in \cA \text{ and } q \equiv 3 \pmod 4, \\
4, & \text{ if }  u\in \cA_2 \text{ and } q \equiv 1 \pmod 4,\\
6, & \text{ if }  u\in \cA_3 \text{ and } q \equiv 1 \pmod 4.
\end{array}
\right.
\end{equation}

Then, we obtain the values of $M_k$; see Table~\ref{Tab:LMk} (where $q \equiv r \pmod {24}$).
\begin{table*}[ht]
\begin{center}
$\begin{array}{|c|c|c|c|c|c|c|c|}
\hline \quad r\quad & \;M_1 \;& \;M_2 \;& \; M_3 \; & \; M_4 \; & \; M_5 \; & \; M_6 \;\\
\hline
\hline \phantom{\int_{x_{x_1}}^{x^2}}\hspace{-20pt} 1  & 0  &\; \frac{q+7}{4} \;& 3   &\; \frac{q-1}{2}  \; & 0  &\; \frac{q-25 }{4} \; \\
\hline \phantom{\int_{x_{x_1}}^{x^2}}\hspace{-20pt} 3  & 1  &\;  0            \;& q-3 &\;  0             \; & 0  &\; 0               \; \\
\hline \phantom{\int_{x_{x_1}}^{x^2}}\hspace{-20pt} 5  & 1  &\; \frac{q+3}{4} \;& 0   &\;  \frac{q-5}{2} \; & 0  &\; \frac{q-5}{ 4}  \; \\
\hline \phantom{\int_{x_{x_1}}^{x^2}}\hspace{-20pt} 7,19  & 2  &\;  0            \;& q-4 &\;  0             \; & 0  &\; 0               \; \\
\hline \phantom{\int_{x_{x_1}}^{x^2}}\hspace{-20pt} 9  & 1  &\; \frac{q-1}{4} \;& 0   &\;  \frac{q-1}{2} \; & 0  &\; \frac{q-9}{ 4}  \; \\
\hline \phantom{\int_{x_{x_1}}^{x^2}}\hspace{-20pt} 11,23 & 0  &\;  0            \;& q-2 &\;  0             \; & 0  &\; 0               \; \\
\hline \phantom{\int_{x_{x_1}}^{x^2}}\hspace{-20pt} 13 & 1  &\; \frac{q+11}{4} \;& 0  &\; \frac{q-5}{2}  \; & 0  &\; \frac{q-13 }{ 4} \; \\
\hline \phantom{\int_{x_{x_1}}^{x^2}}\hspace{-20pt} 17 & 0  &\; \frac{q-1}{4}  \;& 3  &\; \frac{q-1}{2}  \; & 0  &\; \frac{q-17 }{ 4} \; \\
\hline
\end{array}$
\end{center}
\vspace{-1mm}
   \caption{$M_k$, for $k=1,\ldots,6$.}  \label{Tab:LMk}
\end{table*}

Next, using~\eqref{eq:ILMk},  we compute:
$$
\IL= \left\{
\begin{array}{ll}
(7q+17)/24, &\text{ if } q \equiv 1 \pmod  {24}, \vspace{3pt}\\
q/3, &\text{ if } q \equiv 3 \pmod  {24}, \vspace{3pt}\\
(7q+13)/24,  &\text{ if } q \equiv 5 \pmod {24},\vspace{3pt}\\
(q+2)/3,  &\text{ if } q \equiv 7,19 \pmod {24},\vspace{3pt}\\
(7q+9)/24,  &\text{ if } q \equiv 9 \pmod {24},\vspace{3pt}\\
(q-2)/3,  &\text{ if } q \equiv 11,23 \pmod {24},\vspace{3pt}\\
(7q+29)/24,  &\text{ if } q \equiv 13 \pmod {24},\vspace{3pt}\\
(7q+1)/24,  &\text{ if } q \equiv 17 \pmod {24},
\end{array}
\right.
$$
which completes the proof.

\end{proof}

\section{Jacobi curves}
\label{sec: D-curves}

First, we consider the Jacobi qartic curves $\E_{\JQ,u}$ given by~\eqref{eq:JacQ} over a field~$\F$
with characteristic $p\ge 3$. Note that $u \ne \pm1$, since the curve $\E_{\JQ,u}$ is nonsingular.
The change of variable $(\bx,\by)\mapsto (\wX,\wY)$ defined by $ \wX=2(u+\frac{Y+1}{X^2})$ and
$\wY=\frac{2\wX}{X}, $ is a birational equivalence from $\E_{\JQ,u}$ to the elliptic curve
$\widetilde{\E}_{\JQ,u}$ defined by
\begin{equation*}
\label{eq:EC JacQ} \wY^2 = \wX^3-4u\wX^2 + 4(u^2-1)\wX
\end{equation*}
with $j$-invariant $j(\E_{\JQ,u}) = F(u)$ where
$$F(U) = \frac{64(U^2+3)^3}{(U^2-1)^2}.$$

\begin{lemma}\label{lem:JacQ}
For all $u\in \F$ with $u\ne \pm 1$, the Jacobi curve $\E_{\JQ,u}$ is birationally equivalent over
$\F$ to the Legendre curve $\E_{\LL, \frac{1-u}{2}}$.
\end{lemma}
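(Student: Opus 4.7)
The plan is to compose the birational map from $\E_{\JQ,u}$ to its short Weierstrass model $\widetilde{\E}_{\JQ,u}$ recorded just above the lemma with a linear (hence $\F$-rational) isomorphism from $\widetilde{\E}_{\JQ,u}$ to the Legendre curve $\E_{\LL,(1-u)/2}$. Since both steps are $\F$-rational, their composition furnishes a birational equivalence over $\F$.

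First I would factor the cubic of $\widetilde{\E}_{\JQ,u}$ over $\F$ as
$$\wX^{3}-4u\wX^{2}+4(u^{2}-1)\wX \;=\; \wX\bigl(\wX-2(u+1)\bigr)\bigl(\wX-2(u-1)\bigr),$$
so its three roots already lie in $\F$. Next I would look for an isomorphism of the form $(\wX,\wY)=(AX+B,\,CY)$ with $A,C\in\F^{*}$ and $B\in\F$ that takes $\widetilde{\E}_{\JQ,u}$ to $Y^{2}=X(X-1)\bigl(X-(1-u)/2\bigr)$. Matching the triple of roots $\{0,\,2(u+1),\,2(u-1)\}\mapsto\{(1-u)/2,\,1,\,0\}$ via $r\mapsto(r-B)/A$ pins down $B=2(u-1)$ and $A=4$, and matching leading coefficients then forces $C^{2}=A^{3}=64$, so $C=8$, which is nonzero since $\ch\F\ne 2$.

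Finally I would check by direct substitution that $(\wX,\wY)=(4X+2(u-1),\,8Y)$ does the job:
$$\bigl(4X+2(u-1)\bigr)(4X-4)(4X)\;=\;64\,X(X-1)\bigl(X-\tfrac{1-u}{2}\bigr)\;=\;64\,Y^{2}\;=\;(8Y)^{2}.$$
Composing this $\F$-isomorphism $\widetilde{\E}_{\JQ,u}\to\E_{\LL,(1-u)/2}$ with the already-recorded $\F$-birational map $\E_{\JQ,u}\to\widetilde{\E}_{\JQ,u}$ concludes the argument.

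There is no real obstacle here: the $j$-invariants of the two families were already seen to coincide, namely $64(U^{2}+3)^{3}/(U^{2}-1)^{2}$ in each case, and because $\widetilde{\E}_{\JQ,u}$ splits completely over $\F$ the two curves cannot differ by a nontrivial quadratic twist, so an $\F$-isomorphism must exist. The only content of the lemma is to write it down explicitly, which the guess above accomplishes in one line.
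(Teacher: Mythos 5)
Your proof is correct and follows essentially the same route as the paper: compose the recorded $\F$-birational map $\E_{\JQ,u}\to\widetilde{\E}_{\JQ,u}$ with a linear change of variables onto the Legendre model. The paper's explicit isomorphism is $\bx=(\wX-2u+2)/4$, $\by=-\wY/8$, which is exactly the inverse of your substitution $(\wX,\wY)=(4X+2(u-1),\,8Y)$ up to the immaterial sign on $Y$.
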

\begin{proof}
We note that, the Jacobi-quartic curve $\E_{\JQ,u}$ is birationally equivalent over $\F$ to the
elliptic curve $\widetilde{\E}_{\JQ,u}$ defined by~\eqref{eq:EC JacQ}. Moreover, the map $(\wX,\wY)
\mapsto (\bx,\by)$ defined by
$$
\bx=\frac{\wX -2u+2}{4} \mand \by= \frac{-\wY}{8},
$$
is an isomorphism over $\F$ from the curve $\widetilde{\E}_{\JQ,u}$ to the Legendre curve~$\E_{\LL,
\frac{1-u}{2}}$.
\end{proof}

Now, we consider the curves $\E_{\JI,u}$ given by~\eqref{eq:JacI} over a field $\F$ with
characteristic~$p\ge 3$. We note that $u \ne 0,1$, since the curve $\E_{\JI,u}$ is nonsingular. The
change of variable $(\bx,\by,\bz)\mapsto (\wX,\wY)$ defined by $
\wX=\frac{u(\by-\bz)}{u\by-\bz+1-u}$ and $\wY=\frac{u(1-u)\bx}{u\by-\bz+1-u},$ is a birational
equivalence from $\E_{\JI,u}$ to the elliptic curve $\widetilde{\E}_{\JI,u}$ defined by
\begin{equation*}
\label{eq:EC JacI} \wY^2 = \wX^3-(u+1)\wX^2 + u\wX.
\end{equation*}
We note that $\widetilde{\E}_{\JI,u}=\E_{\LL,u}$. Moreover, the Jacobi intersection curve
$\E_{\JI,u}$ is birationally equivalent over $\F$ to the the Jacobi curve $\E_{\JQ,{1-2u}}$.


From Theorem~\ref{thm:numi Leg}, the following lemma gives the numbers of distinct curves over a
finite field~$\F_q$ of the families~\eqref{eq:JacQ} and~\eqref{eq:JacI}.

\begin{theorem}
\label{thm:numi Jacq} For any prime $p\ge 3$, for the numbers  $\IL$, $\IJQ$ and $\IJI$ of
$\F_q$-isomorphism classes of the families~~\eqref{eq:Leg}, \eqref{eq:JacQ} and \eqref{eq:JacI}
respectively, we have
$$
\IL=\IJQ=\IJI.
$$
\end{theorem}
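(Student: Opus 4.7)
The plan is to reduce everything directly to the Legendre case by exploiting the explicit birational maps set up immediately before the theorem. Recall that any birational map between smooth projective curves over a field is an isomorphism; moreover the coordinate changes written down in the proof of Lemma~\ref{lem:JacQ} and in the discussion of $\E_{\JI,u}$ have coefficients in the ground field $\F_q$ (they involve only the parameters $u$ and numerical constants). Hence the relevant birational equivalences are in fact $\F_q$-isomorphisms:
\begin{equation*}
\E_{\JQ,u}\;\cong_{\F_q}\;\E_{\LL,(1-u)/2}\mand \E_{\JI,u}\;\cong_{\F_q}\;\E_{\LL,u}.
\end{equation*}

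Next I would transport these isomorphisms to a bijection of parameter sets. Since $p\ge 3$, the affine-linear map $u\mapsto (1-u)/2$ is a bijection $\F_q\to\F_q$, and a direct check shows $1\mapsto 0$ and $-1\mapsto 1$, so it restricts to a bijection
\begin{equation*}
\F_q\setminus\{\pm 1\}\;\longrightarrow\;\F_q\setminus\{0,1\}.
\end{equation*}
Combined with the first $\F_q$-isomorphism above, this gives a well-defined bijection between the set of $\F_q$-isomorphism classes of curves $\E_{\JQ,u}$ and the set of $\F_q$-isomorphism classes of curves $\E_{\LL,v}$: two values $u, u'$ give $\F_q$-isomorphic Jacobi quartics iff the corresponding $(1-u)/2,(1-u')/2$ give $\F_q$-isomorphic Legendre curves. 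Thus $\IJQ=\IL$. For the Jacobi intersection family the parameter sets coincide ($\F_q\setminus\{0,1\}$ in both cases) and the correspondence $u\mapsto u$ is the identity, so the same argument, using $\E_{\JI,u}\cong_{\F_q}\E_{\LL,u}$, yields $\IJI=\IL$.

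There is essentially no obstacle: the entire content of the theorem has been set up by the earlier explicit coordinate changes, and the only thing to verify is the (purely arithmetic) claim that the parameter maps $u\mapsto (1-u)/2$ and $u\mapsto u$ are bijections between the correct excluded-value sets. The only subtlety worth stating explicitly in the write-up is why a birational map defined by rational functions over $\F_q$ between two curves in Weierstrass form gives an $\F_q$-isomorphism of elliptic curves; this is standard, but deserves a one-line mention so that the reduction to Theorem~\ref{thm:numi Leg} is transparent.
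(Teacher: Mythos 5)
Your proposal is correct and takes essentially the same approach as the paper: both reduce the count to Theorem~\ref{thm:numi Leg} via the explicit $\F_q$-birational equivalences of Lemma~\ref{lem:JacQ} and the Jacobi-intersection discussion, together with the observation that the parameter correspondence is a bijection between the admissible parameter sets. The only cosmetic difference is that the paper phrases the correspondence as $u\mapsto 1-2u$ from the Legendre side while you use its inverse $u\mapsto (1-u)/2$ from the Jacobi-quartic side.
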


\begin{proof}
From Lemma~\ref{lem:JacQ}, for all $u\in \F_q \setminus\set{0,1}$, the Legendre curve $\E_{\LL,u}$
in the family~\eqref{eq:Leg} is birationally equivalent to the curve $\E_{\JQ,1-2u}$ in the
family~\eqref{eq:JacQ}. Clearly, this correspondence is bijective. Moreover, they are birationally
equivalent to the curve $\E_{\JI,u}$ in the family~\eqref{eq:JacI}. Hence, the proof is complete.
\end{proof}

\begin{theorem}
\label{thm:numj Jacq} For any prime $p\ge 3$, for the numbers $\JJQ$ and~$\JJI$ of distinct values
of the $j$-invariant of the families~\eqref{eq:JacQ} and~\eqref{eq:JacI} respectively, we have
$$
\JJQ=\JJI=\displaystyle \fl{\frac{q+5}{6}}.
$$
\end{theorem}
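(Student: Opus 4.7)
The plan is to reduce the counting problem for the Jacobi families to the Legendre case, which is already settled by Theorem~\ref{thm:numj Leg}. The key observation is that a birational equivalence between two smooth projective genus-1 curves over a field is an isomorphism, and hence matches $j$-invariants. Consequently, each birational equivalence established in Section~\ref{sec: D-curves} induces an equality of $j$-invariants between curves in the two families, parametrised by an explicit change of parameter.

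For the Jacobi quartic family, I would apply Lemma~\ref{lem:JacQ}: for every $u \in \F_q$ with $u \ne \pm 1$, one has $j(\E_{\JQ,u}) = j(\E_{\LL,(1-u)/2})$. The parameter map $\varphi\colon u \mapsto (1-u)/2$ is an affine bijection from $\F_q \setminus \{-1,1\}$ onto $\F_q \setminus \{0,1\}$ (both sides have $q-2$ elements, and $\varphi(\pm 1) = 0, 1$), so the image of $\{u : u \ne \pm 1\}$ under $u \mapsto j(\E_{\JQ,u})$ coincides, as a set, with the image of $\{u : u \ne 0,1\}$ under $u \mapsto j(\E_{\LL,u})$. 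Therefore $\JJQ = \JL$.

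For the Jacobi intersection family, the excerpt has already verified that $\E_{\JI,u}$ is birationally equivalent over $\F_q$ to the curve $\wE_{\JI,u} = \E_{\LL,u}$, with identity parameter map on $\F_q \setminus \{0,1\}$. Hence $j(\E_{\JI,u}) = j(\E_{\LL,u})$ for every admissible $u$, giving $\JJI = \JL$. Combining these two identities with Theorem~\ref{thm:numj Leg} yields the claimed common value $\fl{(q+5)/6}$. No real obstacle is expected; the only point requiring care is the bookkeeping that the excluded parameter sets for the two families correspond bijectively under the change of variable, which I verified above for the Jacobi quartic case and is immediate for the Jacobi intersection case.
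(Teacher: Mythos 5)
Your proposal is correct and follows essentially the same route as the paper: both reduce the count to the Legendre case via the bijective parameter correspondences coming from Lemma~\ref{lem:JacQ} (namely $u \leftrightarrow (1-u)/2$ for the Jacobi quartic) and the identification $\widetilde{\E}_{\JI,u}=\E_{\LL,u}$ for the Jacobi intersection, and then invoke Theorem~\ref{thm:numj Leg}. The only cosmetic difference is that the paper formally cites Theorem~\ref{thm:numi Jacq} (the $\F_q$-isomorphism statement, whose proof is exactly this correspondence), whereas you argue directly at the level of $j$-invariants.
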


\begin{proof}
This theorem is a direct consequence of Theorems~\ref{thm:numj Leg} and~\ref{thm:numi Jacq}.
\end{proof}

\section{Hessian curves}
\label{sec: H-curves}

We consider the curves $\E_{\HH,u}$ given by~\eqref{eq:Hes} over a finite field $\F_q$ of
characteristic~$p$. We note that $u^3 \ne 27$, since the curve $\E_{\HH,u}$ is nonsingular. For
$p>3$, the curve $\E_{\HH,u}$ is birationaly equivalent to the elliptic curve
\begin{equation}
\label{eq:H-W} \E_{\W,A_u, B_u}~:~ \wY^2 = \wX^3+A_u\wX +B_u
\end{equation}
where
$$A_u=-u(u^3 + 6^3)/3 \mand B_u=(u^6-540u^3-18^3)/27.$$
Furthermore, we have $j(\E_{\HH,u}) =  \(\frac{u(u^3+6^3)}{u^3-3^3}\)^3$. Therefore, the curve
$\E_{\HH,u}$ has the $j$-invariant $j(\E_{\HH,u}) = (F(u))^3$ where
\begin{equation}\label{eq:map Hes}
F(U) = \frac{U(U^3+216)}{U^3-27}.
\end{equation}

We consider the bivariate rational function $ F(U) - F(V) = {g(U,V)}/{l(U,V)} $ with two relatively
prime polynomials $g$ and $l$. We see that
\begin{equation}
\label{eq:H-guv} g(U,V)=(U-V)(UV-3U-3V-18)h(U,V),
\end{equation}
where $h(U,V)=(U^2+3U+9)V^2 + 3(U^2+12U -18)V+ 9(U^2-6U+36).$

For a fixed value $u\in \F_q$ with $u^3\ne 27$, let $g_u(V)=g(u,V)$. Next, for $u\in \F_q$ with
$u^3\ne 27$, we investigate the number of roots of the polynomial $g_u(V)=g(u,V)$. Moreover, for
$u\in \F_q $ with $u^3\ne 27$, we let
\begin{eqnarray*}
\cJ_{\HH,u}=\set{v~:~v \in \F_q, \ \ \E_{\HH,u} \cong_{\overline{\F}_q} \E_{\HH,v}},\;
\cI_{\HH,u}=\set{v~:~v \in \F_q, \ \ \E_{\HH,u} \cong_{\F_q} \E_{\HH,v}}.
\end{eqnarray*}


In the following, we give the cardinalities of $\cJ_{\HH,u}$ and $\cI_{\HH,u}$, for all $u\in \F_q$
with $u^3 \ne 27$.

\begin{lemma}\label{lem:Ju-Hes}
For all  $u\in \F_q$ with $u^3\ne 27 $, we have
$$
\#\cJ_{\HH,u}= \left\{
\begin{array}{ll}
1, & \text{ if }  p=2 \text{ and } u=0, \text{ or } p=3,\\
4, & \text{ if }  q \equiv 1 \pmod  3,\ p \ne 2 \text{ and } A_u = 0,\\
6, & \text{ if }  q \equiv 1 \pmod  3,\ p \ne 2 \text{ and } B_u = 0,\\
12, & \text{ if }  q \equiv 1 \pmod  3 \text{ and } A_uB_u\ne 0,\\
2, &\text{ if }   q \equiv 2  \pmod  3,\ p \ne 2 \text{ or } u \ne 0.
\end{array}
\right.
$$
\end{lemma}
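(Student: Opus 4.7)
The plan is to translate $j(\E_{\HH,u})=j(\E_{\HH,v})$ into the polynomial equation $F(u)^3=F(v)^3$ in $\F_q$ and count its $\F_q$-solutions via the factorization~\eqref{eq:H-guv} together with the easy identity $F(\omega U)=\omega F(U)$ for any cube root of unity~$\omega$.

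First I would dispatch the degenerate cases. If $p=3$, then $27\equiv 216\equiv 0\pmod 3$, so $F(U)=U$ and $j(\E_{\HH,u})=u^3$; the cube map is the Frobenius on $\F_q$ and hence bijective, giving $\cJ_{\HH,u}=\{u\}$. If $p=2$ and $u=0$, every factor in~\eqref{eq:H-guv} collapses to $V$ after substitution, so $\cJ_{\HH,0}=\{0\}$. The central algebraic ingredient for the remaining cases is the discriminant identity
$$
\mathrm{disc}_V h(u,V)=9(u^2+12u-18)^2-36(u^2+3u+9)(u^2-6u+36)=-27(u^2-6u-18)^2,
$$
verified by a direct expansion; thus in odd characteristic the discriminant is always $-27$ times a perfect square, so its square class in $\F_q$ is controlled entirely by the square class of $-3$.

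If $q\equiv 2\pmod 3$, the cube map is a bijection on $\F_q$, so $F(v)^3=F(u)^3\iff F(v)=F(u)$. In odd characteristic $-3$ is a non-square, so $\mathrm{disc}_V h(u,V)$ is zero or a non-square and $h(u,V)$ contributes no $\F_q$-root unless $u^2-6u-18=0$; the latter condition coincides with the involution $\sigma(u):=3(u+6)/(u-3)$ fixing $u$, and the resulting double root $V_0=-3u/(u+3)$ of $h(u,V)$ can be checked to differ from $u$ (the condition $u^2-6u-18=0$ forces $u\ne 0,-6$). In characteristic~$2$ a brief Artin--Schreier computation, using $\mathrm{Tr}_{\F_q/\F_2}(1/u+1/u^2)=0$ while $\mathrm{Tr}(1)=1$ for $q=2^k$ with $k$ odd, shows $h(u,V)$ has no $\F_q$-root for $u\ne 0$. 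In every case $\cJ_{\HH,u}$ consists of exactly two points.

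If $q\equiv 1\pmod 3$, then $\omega\in\F_q$ and $F(v)^3=F(u)^3$ iff $F(v)=\omega^k F(u)$ for some $k$; combining with $F(\omega v)=\omega F(v)$, the solution set is the union of the three $\omega^k$-translates of the $\F_q$-roots of $F(V)=F(u)$. Now $-3$ is a square, so $h(u,V)$ has two $\F_q$-roots and $F(V)=F(u)$ has four $\F_q$-roots generically. The three subcases then fall out: if $A_u=0$ then $F(u)=0$, the three cosets coincide, and $F(v)=0$ yields exactly $\{0,-6,-6\omega,-6\omega^2\}$, of size~$4$; if $B_u=0$ then $F(u)^3=1728$, and the identity $(v^2-6v-18)^2=v^4-12v^3+216v+324$ (after an $\omega$-scaling of the variable) reduces each equation $F(v)=12\omega^k$ to a doubled quadratic with exactly two roots, giving $3\cdot 2=6$; otherwise the generic count is $3\cdot 4=12$. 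The main obstacle is the distinctness in the generic subcase, which I would settle by the orbit--stabilizer theorem for the Hessian-group ($A_4$) action on the parameter: non-trivial stabilizers occur precisely at $A_u=0$ (order~$3$) and $B_u=0$ (order~$2$), so outside those loci all twelve images are pairwise distinct.
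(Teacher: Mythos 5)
Your proposal is correct in substance and follows essentially the same route as the paper: reduce to counting $\F_q$-roots of $g(u,V)$ from~\eqref{eq:H-guv}, use the equivariance $F(\omega V)=\omega F(V)$ so that $\cJ_{\HH,u}$ is the union of the three $\omega^{k}$-translates of the root set of $F(V)=F(u)$ (disjoint exactly when $A_u\ne 0$), and control the quadratic factor $h$ via the discriminant identity $-27(u^2-6u-18)^2$, which is the paper's $\D_u=-3^3(u^2-6u-18)^2/\alpha(u)^2$, plus the same Artin--Schreier trace computation in characteristic~$2$. Two execution details differ, both legitimately: for $B_u=0$ you exploit the perfect-square identity for $F(v)-12\omega^k$ rather than the paper's case split on the quadratic and quartic factors of $B_u$; and for the generic pairwise distinctness of the twelve values you propose orbit--stabilizer for the $A_4$-action of the twelve fractional-linear maps, where the paper gets this more cheaply from $\mathrm{disc}(g_u)=-3^{21}B_u^4$ (your stabilizer loci $A_u=0$ and $B_u=0$ are right, but you would still need to check that the twelve maps form a group and locate their fixed points, which is comparable work). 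One small step to repair: for $p=2$ and $q\equiv 1\pmod 3$ --- which is a live case of the lemma, falling under $A_uB_u\ne 0$ with count $12$ --- the justification ``$-3$ is a square, so $h(u,V)$ has two $\F_q$-roots'' does not apply, since splitting of a quadratic in characteristic $2$ is governed by a trace, not a discriminant; the fix is the very computation you already set up for $q\equiv 2\pmod 3$, namely that the relevant trace equals $\Tr_{\F_q/\F_2}(1)$, which vanishes precisely when $q=2^k$ with $k$ even, i.e.\ when $q\equiv 1\pmod 3$.
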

\begin{proof}
For a fixed value $u\in \F_q$ with $u^3\ne 27$, let $g_u(V)=g(u,V)$ and
$$
\cZ_u=\set{v~:~ v \in \F_q, v^3\ne 27, \ g_u(v)=0 }.
$$
We note that, $v\in \cJ_u$ if and only if $F(u)^3=F(v)^3$, which
is equivalent to $F(u)=\zeta F(v)$ for some third root of unity $\zeta$ in $\F_q$. Moreover, we
have $F(\zeta v)=\zeta F(v)$ for all $\zeta \in \F_q$ with $\zeta^3=1$. Therefore, for all third
roots of unity $\zeta \in \F_q$, we have $v\in \cJ_u$ if and only if $\zeta v\in \cZ_u$. In other
words,
$$\cJ_{\HH,u}=\set{\zeta v: \  \zeta \in \F_q \text{ with } \zeta^3=1,  v \in \cZ_u}.$$
We note that, all third roots of unity in $\overline{\F}_q$ are in $\F_q$ if and only if $q \equiv
1 \pmod  3$. Otherwise, $\F_q$ has only unity as the trivial third root of unity. Moreover, if
$F(u)\ne 0$, i.e. $A_u\ne 0$, then for all elements $v_1, v_2 \in \cZ_u$ and for all distinct third
roots of unity $\zeta_1, \zeta_2 \in \F_q$, we see $\zeta_1 v_1 \ne \zeta_2 v_2$. If $F(u)=0$, then
for all $v \in \cZ_u$ and for all third roots of unity $\zeta \in \F_q$, we have $F(\zeta v)=0$,
so, $\zeta v \in \cZ_u$. Therefore,
\begin{equation}
\label{eq:ZuJuH} \#\cJ_{\HH,u}= \left\{
\begin{array}{ll}
\#\cZ_u &\text{ if } q \equiv 0,2  \pmod  3, \\
\#\cZ_u &\text{ if } q \equiv 1 \pmod  3 \text{ and } A_u = 0,\\
3\#\cZ_u, & \text{ if }  q \equiv 1 \pmod  3 \text{ and } A_u \ne 0,\\
\end{array}
\right.
\end{equation}
Here, we study the set $\cZ_u$ for all possibilities for $u$ and $q$.

If $p=3$, then we see $\cZ_u=\set{u}$ and $\cJ_{\HH,u}=\set{u}$. Next, we assume that $p\ne 3$. We
write
$$g_u(V)=-(u^3-27)(V-u)(V-v)h_u(V),$$ where $v=\frac{3(u+6)}{u-3}$ and $h_u(V)=V^2+\frac{\beta(u)}{\alpha(u)}V+\frac{\gamma(u)}{\alpha(u)}$, where
$$\alpha(U)=U^2+3U+9,\ \beta(U) =3(U^2+12U -18),\ \gamma(U) = 9(U^2-6U+36)$$ (see
Equation~\eqref{eq:H-guv}). We write,
$$h_u(V)=(V-v_1)(V-v_2),$$
where $v_i=\frac{3\zeta^i(u+6\zeta^i)}{u-3\zeta^i}$, for $i=1,2$, $\zeta \in \overline{\F}_q$ with
$\zeta^3=1$ and $\zeta \ne 1$.

For $p \ne 3$, the polynomial $g_u$ is square free if $\Delta_u$ the discriminant of $g_u$ is
nonzero. We have $\Delta_u=-3^{21}B_u^4$, so $g_u$ has distinct roots with multiplicity one if
$B_u\ne 0$.

We note that, if $v^3=27$ then~$g_u(v)=-3^5v(u^3-27)$. Thus, for all $v\in \cZ_u$, we have $v^3\ne
27$.

Now, we distinguish the following possibilities for $q$.
\begin{enumerate}
\item First, we assume that $p=2$. We have $\beta(u)=\gamma(u)=u^2$. For $u=0$, we have
$\cZ_0=\set{0}$ and $\cJ_{\HH,0}=\set{0}$. Next, we let $u\ne 0$, i.e., $A_u B_u\ne 0$.
We note that, the polynomial $h_u$ is irreducible over~$\F_q$ if and only if
$\Tr_{\F_q/\F_2}\(\frac{\gamma(u)/\alpha(u)}{(\beta(u)/\alpha(u))^2}\)=1$. Moreover,
$$\textstyle \Tr_{\F_q/\F_2}\(\frac{\alpha(u)}{\beta(u)}\)=\Tr_{\F_q/\F_2}(1+\frac{1}{u}+\frac{1}{u^2})=\Tr_{\F_q/\F_2}(1).$$
Furthermore, $\Tr_{\F_q/\F_2}(1)=1$ if and only if $q \equiv 2 \pmod  3$. Therefore, we have the
following cases.
\begin{itemize}
\item If $q \equiv 1 \pmod  3$, then $v_1, v_2 \in \F_q$. Hence,
$$\cZ_u=\set{u,v,v_1,v_2}.$$
\item
If $q \equiv 2 \pmod  3$, then $\cZ_u=\set{u,v}.$ 
\end{itemize}
\item
Second, we assume that $p > 3$. We distinguish the following possibilities for~$u\in \F_q$.
\begin{itemize}
\item
First, we assume that $B_u=0$. Then $A_u\ne 0$, since $u^3\ne 27$. Then, $u$ is a root of one of
the following equations.
\begin{equation}
\label{eq: terms of delta} U^2-6U-18=0  \quad \text{ or } \quad U^4+6U^3+54U^2-108U+324=0.
\end{equation}
Since $p>3$, the sets of solutions to the equations in \eqref{eq: terms of delta} do not intersect
and so may be considered separately.
\begin{itemize}
\item If $u^2-6u-18=0$, then $v=u$ and $v_1=v_2=-u+6$.
So, $$\cZ_u=\set{u, -u+6}.$$
\item If $u^4+6u^3+54u^2-108u+324=0$, then
$$g_u(V)=-(u^3-27)(V-u)^2(V-v)^2.$$
Furthermore, $v\ne u$. So, $$\cZ_u=\set{u,v}.$$
\end{itemize}
\item
Next, we assume that $B_u\ne 0$.
Let~$\D_u$ be the discriminant of $h_u$, i.e., $\D_u=-3^3(u^2-6u-18)^2/\alpha^2(u).$ We consider
the following cases for $q$.
\begin{itemize}
\item
If $q \equiv 1 \pmod  3$, then $\D_u$ is a quadratic residue in $\F_q$. Moreover, $\D_u\ne 0$. Thus, $h_u$ has two distinct roots $v_1, v_2$ in $\F_q$. Therefore, $$\cZ_u=\set{u,v,v_1,v_2}.$$ 
\item
If $q \equiv 2 \pmod  3$, then $\D_u$ is a quadratic non-residue in $\F_q$. Thus, $h_u$ has no root
in $\F_q$. Therefore,
$$\cZ_u=\set{u,v}.$$
\end{itemize}
\end{itemize}
\end{enumerate}
Therefore, we have
$$
\#\cZ_u= \left\{
\begin{array}{ll}
1, & \text{ if }  p=2 \text{ and } u=0, \text{ or } p=3,\\
2, & \text{ if }  q \equiv 1 \pmod  3,\ p \ne 2 \text{ and } B_u = 0,\\
4, & \text{ if }  q \equiv 1 \pmod  3 \text{ and } B_u\ne 0,\\
2, &\text{ if }   q \equiv 2  \pmod  3,\ p = 2 \text{ and } u\ne 0,\\
2, &\text{ if }   q \equiv 2  \pmod  3 \text{ and } p\ne 2.
\end{array}
\right.
$$
Next, using~\eqref{eq:ZuJuH}, we obtain the cardinality of $\cJ_{\HH,u}$.
\end{proof}

In the following Lemma, we study the $\F_q$-isomorphism classes of Hessian curves over~$\F_q$.

\begin{lemma}\label{lem:isoH}
For all elements $u,v \in \F_q$ with $u^3\ne 27, v^3\ne 27$, we have $\E_{\HH,u} \cong_{\F_q}
\E_{\HH,v}$  if and only if $u, v$ satisfy one of the following:
\begin{enumerate}
  \item $v=\zeta_1 u,$
  \item $v=\frac{3\zeta_1(u+6\zeta_2)}{u-3\zeta_2}$ and $q \equiv 1 \pmod  3$,
\end{enumerate}
for some third roots of unity $\zeta_1, \zeta_2 \in \F_q$.
\end{lemma}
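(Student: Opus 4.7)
My plan is to establish the two directions of the equivalence separately. For the sufficiency ($\Leftarrow$), I will exhibit explicit $\F_q$-rational isomorphisms realizing each transformation. For the necessity ($\Rightarrow$), I will combine Lemma~\ref{lem:Ju-Hes} with a twist analysis in Weierstrass form.

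For sufficiency, I handle condition 1 by the scaling $(X:Y:Z)\mapsto(X:\zeta_1^{-1}Y:Z)$: since $\zeta_1^3=1$, this lies in $\mathrm{PGL}_3(\F_q)$ whenever $\zeta_1\in\F_q$, and it sends $X^3+Y^3+Z^3=uXYZ$ to $X^3+Y^3+Z^3=(\zeta_1 u)XYZ$. For condition 2, assuming $q\equiv 1\pmod 3$ so that a primitive cube root of unity $\omega$ lies in $\F_q$, I will apply the ``Fourier'' projective transformation $M=(\omega^{(i-1)(j-1)})_{i,j=1}^{3}\in\mathrm{PGL}_3(\F_q)$. Exploiting the identities $\sum_{k=0}^{2}(X+\omega^k Y+\omega^{2k}Z)^3 = 3(X^3+Y^3+Z^3)+18XYZ$ and $(X+Y+Z)(X+\omega Y+\omega^2 Z)(X+\omega^2 Y+\omega Z)=X^3+Y^3+Z^3-3XYZ$, a direct computation shows that $M$ maps $\E_{\HH,u}$ to $\E_{\HH,w_0}$ with $w_0:=3(u+6)/(u-3)$; composing $M$ with scalings from condition 1 on either side then produces all isomorphisms of the form $v = 3\zeta_1(u+6\zeta_2)/(u-3\zeta_2)$.

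For necessity, assume $\E_{\HH,u}\cong_{\F_q}\E_{\HH,v}$. Then $v\in\cJ_{\HH,u}$, and by the proof of Lemma~\ref{lem:Ju-Hes} the $\F_q$-elements of $\cJ_{\HH,u}$ are exactly the products $\zeta w$ with $\zeta\in\F_q$ a cube root of unity and $w$ an $\F_q$-root of $g_u(V)$ as in~\eqref{eq:H-guv}. When $q\equiv 1\pmod 3$, all cube roots of unity lie in $\F_q$ and the quadratic factor $h_u$ splits, so every element of $\cJ_{\HH,u}$ matches condition 1 or condition 2 by inspection; the sufficiency direction then closes the argument in this case.

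The principal obstacle is the case $q\equiv 2\pmod 3$: here $\cJ_{\HH,u}=\{u,w_0\}$, and I must show that $w_0\neq u$ does not give an $\F_q$-isomorphism. For $p>3$ I pass to the Weierstrass forms $\E_{\W,A_u,B_u}$, which requires $\lambda\in\F_q^*$ with $A_{w_0}=\lambda^4 A_u$ and $B_{w_0}=\lambda^6 B_u$. Using the factorization $u^3+216=(u+6)(u^2-6u+36)$ one obtains $A_{w_0}/A_u=729/(u-3)^4$, and a careful sign analysis (verifiable via a small test case such as $u=6,\ w_0=12$) yields $B_{w_0}/B_u=-27^3/(u-3)^6$. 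Hence $\lambda^2=-27/(u-3)^2$, so the obstruction becomes whether $\chi_2(-3)=1$, which happens precisely when $q\equiv 1\pmod 3$; this fails in our case, forcing the contradiction. The special branches $A_u=0$ ($j=0$) and $B_u=0$ ($j=1728$) require replacing the square condition by a sixth- or fourth-power condition in $\F_q^*$, handled by an analogous character argument; the low-characteristic cases $p\in\{2,3\}$ fall almost entirely into degenerate branches of Lemma~\ref{lem:Ju-Hes} and are immediate modulo an adjustment of the normal form. The trickiest point throughout is pinning down the sign of $B_{w_0}/B_u$, which is what distinguishes the $q\equiv 1$ and $q\equiv 2$ cases modulo 3.
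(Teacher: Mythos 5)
Your overall strategy coincides with the paper's: explicit maps for sufficiency (your ``Fourier'' matrix $M$ is exactly the projective change of variables the paper writes out in affine coordinates, and your computation that $M$ carries $\E_{\HH,u}$ to $\E_{\HH,3(u+6)/(u-3)}$ via the two symmetric-function identities is correct), and for necessity a reduction to membership in $\cJ_{\HH,u}$ followed by a Weierstrass twist obstruction. Your ratios $A_{w_0}/A_u=3^6/(u-3)^4$ and $B_{w_0}/B_u=-3^9/(u-3)^6$, and the conclusion that an $\F_q$-isomorphism would force $\chi_2(-3)=1$, agree with the paper and correctly dispose of the main branch $p>3$, $q\equiv 2\pmod 3$, $A_uB_u\ne 0$.

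Two points in the necessity direction are genuine gaps rather than routine deferrals. First, when $u^2-6u-18=0$ (a sub-case of $B_u=0$, occurring only for $q\equiv 11\pmod{12}$) the involution $t\mapsto 3(t+6)/(t-3)$ fixes $u$, and the second $\F_q$-rational element of $\cJ_{\HH,u}$ is instead $-u+6$, the common value of the two Galois-conjugate roots of $h_u$ (rational even though the $\zeta_2$'s producing it are not). Your ratio computations simply do not apply to this element; the paper must separately compute $A_{-u+6}=-(1-\delta)^4A_u/4$ with $u=3(1+\delta)$, $\delta^2=3$, and use that $-1$ is a non-square (valid because this branch forces $q\equiv 3\pmod 4$). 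So ``replacing the square condition by a fourth-power condition'' understates the issue: the element to be excluded is a different one. Second, and more seriously, the case $p=2$, $q\equiv 2\pmod 3$ is neither degenerate nor immediate: for every $u\ne 0$ one has $\cJ_{\HH,u}=\{u,\,u/(u+1)\}$ and must rule out an $\F_q$-isomorphism, and the quartic/sextic twist formalism with $A_u,B_u$ and quadratic characters has no analogue in characteristic $2$. The paper passes to $y^2+xy=x^3+a_ux^2+b_u$, observes $b_u=b_{u/(u+1)}$, and shows the obstruction is $\Tr_{\F_q/\F_2}(a_u+a_v)=\Tr_{\F_q/\F_2}(1+1/u+1/u^2)=\Tr_{\F_q/\F_2}(1)=1$ because $[\F_q:\F_2]$ is odd. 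That Artin--Schreier trace computation is a required idea your plan does not contain, and it is exactly the case your sketch labels ``immediate.''
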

\begin{proof}
Let $u,v \in \F_q$ with $u^3\ne 27, v^3\ne 27$. First, we assume that $\E_{\HH,u} \cong_{\F_q}
\E_{\HH,v}$. So, $v\in \cJ_{\HH,u}$. From the proof of Lemma~\ref{lem:Ju-Hes}, we see that
$$\cJ_{\HH,u}=\set{w \in \F_q : w=\zeta_1u \text{ or }\ w=\frac{3\zeta_1(u+6\zeta_2)}{u-3\zeta_2},  \zeta_1, \zeta_2 \in \overline{\F}_q, \zeta_1^3=1, \zeta_2^3=1  }.$$
We consider the following cases for $q$.
\begin{itemize}
\item If $p=3$, then we have $v=u$, which satisfy the property 1.
\item If $q \equiv 1 \pmod  3$, then $\F_q$ contains all third roots of unity in $\overline{\F}_q$. So, $u,v$ satisfy either the property 1 or 2.
\item If $q \equiv 2 \pmod  3$, then $\zeta=1$ is the only third root of unity in $\F_q$. From the proof of Lemma~\ref{lem:Ju-Hes}, we have
$$
\cJ_{\HH,u}= \left\{
\begin{array}{ll}
\set{u}, &   \text{ if }  p=2 \text{ and } u=0, \\
\set{u, \frac{3(u+6)}{u-3}}, & \text{ if }  q \equiv 2 \pmod  3, u^2-6u-18 \ne 0,\\
\set{u, -u+6}, & \text{ if }  q \equiv 2 \pmod  3 \text{ and } u^2-6u-18=0.\\
\end{array}
\right.
$$
Then, we distinguish the following cases to show that $v=u$.
\begin{itemize}
\item We let $p=2$. If $u=0$, then clearly $v=u$. So, we assume that $u\ne 0$.
Then, the map $(\bx,\by)\mapsto (\wX,\wY)$ defined by
$$
\wX=\frac{(u^3+1)(\bx+\by)}{u^3(\bx+\by+u)} \mand
\wY=\frac{(u^3+1)(\bx+(1+u^3)\by+u)}{u^6(\bx+\by+u)},
$$
is a birational equivalence from $\E_{\HH,u}$ to the elliptic curve
\begin{equation*}
\label{eq:BH-W} \E_{\BW, a_u, b_u }~:~ \wY^2 + \wX\wY =\wX^3 + a_u \wX^2 + b_u,
\end{equation*}
where $a_u=\frac{1}{u^3}$, $b_u=\(\frac{1+u^3}{u^4}\)^3$.
Similarly, $\E_{\HH,v}$ is birationally equivalent to $\E_{\BW,a_v,b_v}$.

If $v= \frac{3(u+6)}{u-3}$, i.e., $v= \frac{u}{u+1}$, then we have $b_u=b_v$. Next, we note that
the elliptic curves $\E_{\BW,a_u,b_u}$ and $\E_{\BW,a_v,b_v}$ are isomorphic over~$\F_q$ if and
only if there exists an element $\gamma$ in $\F_q$ such that $a_u+a_v=\gamma^2+\gamma$. In other
words, they are isomorphic if and only if $\Tr_{\F_q/\F_2}(a_u+a_v)=0$. But, for $v=
\frac{u}{u+1}$, we have
$$\textstyle\Tr_{\F_q/\F_2}(a_u+a_v)=\Tr_{\F_q/\F_2}(1+\frac{1}{u}+\frac{1}{u^2})=\Tr_{\F_q/\F_2}(1)=1.$$
So, the curves $\E_{\HH,u}$ and $\E_{\HH, \frac{u}{u+1}}$ are not isomorphic over $\F_q$. Hence, we
only have $v=u$.
\item We let $p>3$. Then, the map $(\bx,\by)\mapsto (\wX,\wY)$ given by
$$
\wX=Z - u^2  \mand \wY=3Z(Y-X),
$$
where $Z=\frac{4(u^3-27)}{3(u+3X+3Y)}$, is a birational equivalence from $\E_{\HH,u}$ to the
elliptic curve
\begin{equation*}
\label{eq:H-W} \E_{\W,A_u, B_u}~:~ \wY^2 = \wX^3+A_u\wX +B_u
\end{equation*}
where
$$A_u=\frac{-u(u^3 + 6^3)}{3} \mand B_u=\frac{u^6-540u^3-18^3}{27}.$$
Also, $\E_{\HH,v}$ is birationally equivalent to $\E_{\W,A_v,B_v}$. We note that,  $\E_{\W,A_u,B_u}
\cong_{\F_q} \E_{\W,A_v,B_v}$ if and only if there exist an element $\alpha \in \F^*_q$ such that
$A_v=\alpha^4 A_u$ and $B_v=\alpha^6 B_u$. We consider the following cases.
    \begin{itemize}
    \item If $u^2-6u-18 \ne 0$, we let $v=\frac{3(u+6)}{u-3}$. Then, $A_v=3^6A_u/(u-3)^4$ and $B_v=-3^9B_u/(u-3)^6$.
    We note that, $-3$ is a quadratic non-residue in $\F_q$. If $B_u=0$, then
    from~\eqref{eq: terms of delta}, we see $u^4 + 6u^3 + 54u^2 - 108u + 324=0$. Thus, $(u^2+3u+9)^2=-3^3(u-3)^2$, i.e., $-3$ is a quadratic residue in $\F_q$, which is a contradiction. So, we have $B_u\ne 0$. Therefore, $\E_{\HH,u}$ is not isomorphic over $\F_q$ to $\E_{\HH, \frac{3(u+6)}{u-3}}$. Hence, we have $v=u$.
    \item
    If $u^2-6u-18 =0$, we let $v=-u+6$. We note that, this case happens if $3$ is a quadratic residue in $\F_q$. For $q \equiv 2 \pmod  3$, $3$ is a quadratic residue in $\F_q$ if and only if $q \equiv 3 \pmod  {4}$. Let $u=3(1+\delta),$ where $\delta^2=3$. Then, $A_v=-(1-\delta)^4A_u/4$. Since, $-1$ is a quadratic non-residue in $\F_q$ with $q \equiv 3 \pmod  {4}$, the curves $\E_{\HH,u}$ and $\E_{\HH, -u+6}$ are not isomorphic over $\F_q$. Hence, we only have $v=u$.
    \end{itemize}
\end{itemize}
\end{itemize}
Now, we study the other direction of this Lemma. We consider the following cases for $u,v$.
\begin{itemize}
\item
First, we let $v=\zeta_1 u$, for some $\zeta_1 \in \F_q$ with $\zeta_1^3=1$. Then $\E_{\HH,u}$ can
be transformed to $\E_{\HH,v}$ via the invertible map $X \mapsto \zeta_1\wX$ and $Y \mapsto \wY$,
so $\E_{\HH,u} \cong_{\F_q} \E_{\HH,v}$.
\item
Second, we assume that $q \equiv 1 \pmod  3$. We let $v=\frac{3\zeta_1(u+6\zeta_2)}{u-3\zeta_2}$,
for some third roots of unity $\zeta_1, \zeta_2 \in \F_q$. Moreover, let $\zeta \in \F_q$ with
$\zeta^2+\zeta+1=0$. Then, $\E_{\HH,u}$ can be transformed to $\E_{\HH,v}$ via the invertible map
$(\bx,\by)\mapsto (\wX,\wY)$ defined by
$$
\wX=\frac{\zeta_1^2(-\zeta_2(\zeta + 1)X +\zeta Y +1)}{\zeta_2 X+Y+1} \mand \wY=\frac{\zeta_2\zeta
X -(\zeta +1)Y +1}{\zeta_2X+Y+1}.
$$
So, $\E_{\HH,u} \cong_{\F_q} \E_{\HH,v}$.
\end{itemize}
\end{proof}

Now, we study the cardinality of $\cI_{\HH,u}$, for $u\in \F_q$ with $u^3\ne 27$.

\begin{lemma}\label{lem:IuH}
For all  $u\in \F_q$ with $u^3\ne 27 $, we have
$$
\#\cI_{\HH,u}= \left\{
\begin{array}{ll}
1, &\text{ if }   q \equiv 0,2  \pmod  3,\\
4, & \text{ if }  q \equiv 1 \pmod  3,\ p \ne 2 \text{ and } A_u = 0,\\
6, & \text{ if }  q \equiv 1 \pmod  3,\ p \ne 2 \text{ and } B_u = 0,\\
12, & \text{ if }  q \equiv 1 \pmod  3 \text{ and } A_uB_u\ne 0,\\
\end{array}
\right.
$$
\end{lemma}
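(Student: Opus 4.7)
The plan is to reduce the computation of $\#\cI_{\HH,u}$ to the already-computed $\#\cJ_{\HH,u}$ (Lemma~\ref{lem:Ju-Hes}) by using the explicit characterization of $\F_q$-isomorphism given by Lemma~\ref{lem:isoH}. Since every $\F_q$-isomorphism is in particular an $\overline{\F}_q$-isomorphism, we always have $\cI_{\HH,u} \subseteq \cJ_{\HH,u}$, so the question is in each case whether this containment is an equality or strict.

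First I would dispose of the easy cases $q \equiv 0,2 \pmod 3$. When $p=3$ we already know from Lemma~\ref{lem:Ju-Hes} that $\cJ_{\HH,u}=\{u\}$, hence $\cI_{\HH,u}=\{u\}$. When $q \equiv 2 \pmod 3$, $\F_q$ contains only the trivial cube root of unity $\zeta_1=\zeta_2=1$, so part~2 of Lemma~\ref{lem:isoH} forces $v = \frac{3(u+6)}{u-3}$ if it is applicable; but the analysis carried out in the proof of Lemma~\ref{lem:isoH} (separately for $p=2$ via the Artin--Schreier obstruction $\Tr_{\F_q/\F_2}(a_u+a_v)=1$, and for $p>3$ via the short-Weierstrass twisting obstruction using $\chi_2(-3)=-1$) shows that in this regime no nontrivial $v$ can be $\F_q$-isomorphic to $u$. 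Thus $\cI_{\HH,u}=\{u\}$, and the first line of the table follows.

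Next I would treat the main case $q \equiv 1 \pmod 3$, where $\F_q$ contains all three cube roots of unity. Here I claim that $\cI_{\HH,u}=\cJ_{\HH,u}$, so that the three remaining lines of the table are obtained by direct transcription from Lemma~\ref{lem:Ju-Hes}. To prove the claim, I would compare the two descriptions. From the proof of Lemma~\ref{lem:Ju-Hes} we have $\cJ_{\HH,u} = \{\zeta w : \zeta^3=1,\ w \in \cZ_u\}$ with $\cZ_u=\{u,\ \frac{3(u+6)}{u-3},\ v_1,\ v_2\}$ where $v_i=\frac{3\zeta^i(u+6\zeta^i)}{u-3\zeta^i}$ and $\zeta$ is a primitive cube root of unity. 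Writing $\zeta_1$ for an arbitrary cube root of unity and reparametrising $\zeta_1 \cdot \frac{3\zeta^i(u+6\zeta^i)}{u-3\zeta^i} = \frac{3(\zeta_1\zeta^i)(u+6\zeta^i)}{u-3\zeta^i}$, I see that $\cJ_{\HH,u}$ coincides precisely with the set of $v$ of the shape $\zeta_1 u$ or $\frac{3\zeta_1(u+6\zeta_2)}{u-3\zeta_2}$ appearing in Lemma~\ref{lem:isoH}. Hence every element of $\cJ_{\HH,u}$ is already $\F_q$-isomorphic to $\E_{\HH,u}$, giving the desired equality and thereby matching the cardinalities $4,6,12$ according to which of the cases $A_u=0$, $B_u=0$, $A_uB_u\ne 0$ holds.

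The main conceptual obstacle is not in this final lemma itself but in correctly matching the reparametrisations of the two pictures of $\cJ_{\HH,u}$, and in trusting the case analysis from Lemma~\ref{lem:isoH} for $q \equiv 2 \pmod 3$ where the $\overline{\F}_q$-isomorphism class is genuinely larger than the $\F_q$-isomorphism class. Once the parameterisation bookkeeping is in place, the lemma is essentially a corollary of Lemmas~\ref{lem:Ju-Hes} and~\ref{lem:isoH}.
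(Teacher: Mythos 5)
Your proposal is correct and follows essentially the same route as the paper: the paper's proof likewise reads off from Lemma~\ref{lem:isoH} that $\cI_{\HH,u}=\set{u}$ when $q \equiv 0,2 \pmod 3$ and $\cI_{\HH,u}=\cJ_{\HH,u}$ when $q \equiv 1 \pmod 3$, and then cites Lemma~\ref{lem:Ju-Hes} for the cardinalities. Your extra care in matching the parametrisation of $\cZ_u$ with the isomorphisms of Lemma~\ref{lem:isoH} is exactly the bookkeeping the paper leaves implicit.
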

\begin{proof}
Let $u\in \F_q$ with $u^3\ne 27 $. From Lemma~\ref{lem:isoH}, we see that
$$
\cI_{\HH,u}= \left\{
\begin{array}{ll}
\set{u},     & \text{ if }  q \equiv 0,2  \pmod  3,\\
\cJ_{\HH,u}, & \text{ if }  q \equiv 1 \pmod  3.
\end{array}
\right.
$$
Then Lemma~\ref{lem:Ju-Hes} completes the proof.
\end{proof}

The following theorems give the number of distinct Hessian curves over $\F_q$.

\begin{theorem}
\label{thm:numj Hes} For any prime $p$, for the number $\JH$ of distinct values of the
$j$-invariant of the family~\eqref{eq:Hes}, we have
$$
\JH= \left\{
\begin{array}{ll}
\displaystyle{\; q-1} &   \text{ if }  q \equiv 0 \pmod {3}, \vspace{2pt}\\
\displaystyle\fl{\frac{q+11}{12}}&   \text{ if }  q \equiv 1 \pmod {3}, \vspace{2pt}\\
\displaystyle \fl{\frac{\;q\;}{2}} &   \text{ if }  q \equiv 2 \pmod {3}.
\end{array}
\right.
$$
\end{theorem}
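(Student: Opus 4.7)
The plan is to follow the recipe used for Theorem~\ref{thm:numj Leg}: write
\begin{equation*}
\JH = \sum_{\substack{u \in \F_q \\ u^3 \neq 27}} \frac{1}{\#\cJ_{\HH,u}},
\end{equation*}
introduce $N_k = \#\{u \in \F_q : u^3 \neq 27,\ \#\cJ_{\HH,u} = k\}$, and use Lemma~\ref{lem:Ju-Hes} to reduce to $\JH = \sum_{k \in \{1,2,4,6,12\}} N_k/k$. The parameter space $\{u \in \F_q : u^3 \neq 27\}$ has cardinality $q-1$ when $p = 3$, $q-1$ when $q \equiv 2 \pmod 3$ (a unique cube root of $27$ is excluded), and $q-3$ when $q \equiv 1 \pmod 3$ (three cube roots of $27$ are excluded).

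For $q \equiv 0 \pmod 3$, Lemma~\ref{lem:Ju-Hes} makes every $\#\cJ_{\HH,u} = 1$, giving $\JH = q-1$ at once. For $q \equiv 2 \pmod 3$, every valid $u$ has $\#\cJ_{\HH,u} = 2$ except $u = 0$ when $p = 2$ (where $\#\cJ_{\HH,0} = 1$); this yields $\JH = (q-1)/2$ when $p > 3$ and $\JH = 1 + (q-2)/2 = q/2$ when $p = 2$, both unified as $\fl{q/2}$.

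The substantive case is $q \equiv 1 \pmod 3$. For $p > 3$ I would first show $N_4 = 4$ by solving $A_u = -u(u^3 + 216)/3 = 0$: the three cube roots of $-216 = (-6)^3$ all lie in $\F_q$ and none satisfies $u^3 = 27$, so together with $u = 0$ they account for exactly four values. To compute $N_6$, I would exploit the factorization
\begin{equation*}
27\, B_u = (u^2 - 6u - 18)\bigl(u^4 + 6u^3 + 54u^2 - 108u + 324\bigr)
\end{equation*}
extracted from the proof of Lemma~\ref{lem:Ju-Hes}. The quadratic factor contributes two $\F_q$-roots precisely when $3$ is a quadratic residue in $\F_q$, i.e., when $q \equiv \pm 1 \pmod{12}$. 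For the quartic factor, the identity
\begin{equation*}
u^4 + 6u^3 + 54u^2 - 108u + 324 = (u^2 + 3u + 9)^2 + 27(u-3)^2
\end{equation*}
reduces root-counting to picking $w \in \F_q$ with $w^2 = -27$ (available since $q \equiv 1 \pmod 3$) and then solving two quadratics in $u$ whose discriminants multiply to a perfect square; this forces the quartic to have either $0$ or $4$ $\F_q$-rational roots, with the outcome fixed by a single quadratic-character computation depending on $q \pmod{24}$. Once $N_4$ and $N_6$ are known, $N_{12} = (q-3) - N_4 - N_6$ and the arithmetic collapses to
\begin{equation*}
\JH = \frac{N_4}{4} + \frac{N_6}{6} + \frac{N_{12}}{12} = \frac{q + 5 + N_6}{12},
\end{equation*}
which a finite residue-class check (modulo $12$, or modulo $24$ if the quartic contribution demands) identifies with $\fl{(q+11)/12}$. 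In the char-$2$ sub-case Lemma~\ref{lem:Ju-Hes} gives $\#\cJ_{\HH,0} = 1$ and $\#\cJ_{\HH,u} = 12$ for the remaining $q-4$ valid parameters, so $\JH = 1 + (q-4)/12 = (q+8)/12$, matching $\fl{(q+11)/12}$ on the sequence $q \in \{4,16,64,\dots\}$ (which always satisfies $q \equiv 4 \pmod{12}$).

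The main obstacle will be the quartic analysis: correctly expressing the $\F_q$-rational root count of $(u^2 + 3u + 9)^2 + 27(u-3)^2 = 0$ in terms of the quadratic characters of $-1$, $2$ and $3$, and then verifying that $(q + 5 + N_6)/12$ aligns with the single floor expression in every residue class of $q$ modulo $24$. The remaining steps are routine bookkeeping built on top of Lemma~\ref{lem:Ju-Hes} and the factorization of $27 B_u$.
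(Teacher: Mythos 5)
Your proposal is correct and follows essentially the same route as the paper: the same weighted count $\JH=\sum_k N_k/k$ over the fibres classified by Lemma~\ref{lem:Ju-Hes}, the same identification $N_4=4$ from $A_u=0$, and the same factorization $27B_u=(u^2-6u-18)(u^4+6u^3+54u^2-108u+324)$ to get $N_6$, with identical handling of the characteristic $2$, characteristic $3$ and $q\equiv 2\pmod 3$ cases. The one point you leave open resolves cleanly: writing the quartic as $(u^2+3u+9)^2+27(u-3)^2$ and splitting over a square root of $-27$ gives two quadratics whose discriminants have product a perfect square, and both are squares exactly when $3$ is (equivalently, when $q\equiv 1\pmod{12}$), so $N_6$ is $6$ or $0$ according as $q\equiv 1$ or $7\pmod{12}$ --- no dependence on $q\pmod{24}$ arises, exactly as in the paper.
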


\begin{proof}
We note that $$\JH=\sum_{u \in \F_q, u^3\ne 27 } \frac{1}{\#\cJ_{\HH,u}}.$$ Let $$N_k= \#\set{u: u
\in \F_q, u^3\ne 27, ~\#\cJ_{\HH,u}=k}, \qquad k = 1, 2, \ldots .
$$
From Lemma~\ref{lem:Ju-Hes}, we see that $N_k=0$ for $k > 12$. Then,
\begin{equation}
\label{eq:JH-Nk} \JH=\sum_{k=1}^{12} \frac{N_k}{k}.
\end{equation}

We consider the following possibilities for $q$.
First, we assume that $q \equiv 0\pmod 3$. From Lemma~\ref{lem:Ju-Hes}, we see $N_1=q-1$ and
$N_k=0$, for $k \ge 2$. Then, using~\eqref{eq:JH-Nk}, we have $$\displaystyle \JH=q-1,\; \text{ if
} q \equiv 0\pmod 3.$$

Second, we assume that $q \equiv 1\pmod 3$. 
In this case, the number of third roots of 27 in $\F_q$ equals 3. If $p=2$, then $N_1=1$,
$N_{12}=q-4$ and $N_k=0$, for $2 \le k \le 11$ (see~Lemma~\ref{lem:Ju-Hes}).
Using~\eqref{eq:JH-Nk}, we have $\JH=(q+8)/12$.

If $p\ne 2$, then $N_k=0$ for $k\ne 4,6,12$. Also, $N_4=d_1$, $N_6=d_2$ and $N_{12}=q-3-d_1-d_2$,
where $d_1$, $d_2$ are the numbers of $u\in \F_q$ with $A_u=0$, $B_u=0$ respectively (see
Lemma~\ref{lem:Ju-Hes}). Then, using~\eqref{eq:JH-Nk}, we have
$$\JH=(q+2d_1+d_2-3)/12.$$

We see that, $d_1$ is the number of solutions in $\F_q$ to the equation $U(U^3+6^3)=0$. Then,
$d_1=4$, since $q \equiv 1\pmod 3$ and $p\ne 2$. Next, we compute $d_2$,  i.e., the number of
solutions in $\F_q$ to the equations in~\eqref{eq: terms of delta}. We note that, the sets of
solutions to the equations in \eqref{eq: terms of delta} are disjoint. Moreover, they do not
intersect with the set of the third roots of 27 in $\F_q$.

The discriminant of the quadratic equation  $U^2-6U-18=0$ in \eqref{eq: terms of delta} equals~108.
So, it has two solutions if 3 is a quadratic residue in $\F_q$ or no solution if 3 is a quadratic
non-residue in $\F_q$. We note that $-3$ is a quadratic residue in $\F_q$, since $q \equiv 1\pmod
3$. Also, $-1$ is a quadratic residue in $\F_q$ if and only if $q \equiv 1 \pmod 4$. Hence, the
number of solutions to the quadratic equation  in \eqref{eq: terms of delta} is 2 if $q \equiv
1\pmod {12}$ and 0 if $q \equiv 7\pmod {12}$.

The quartic equation in \eqref{eq: terms of delta} can be factored as
$$(U^2+3(1+\omega)U+9(1-\omega))(U^2+3(1-\omega) U+9(1+\omega)),$$
where $\pm \omega$, are the quadratic roots of $-3$ in $\F_q$. Since, the discriminants of above
quadratic factors are $27(1\pm \omega)^2$, the number of solutions to the quadratic equation in
\eqref{eq: terms of delta} is 4 if $q \equiv 1\pmod {12}$ and 0 if $q \equiv 7\pmod {12}$.

Therefore, $d_2=6$ if $q \equiv 1\pmod {12}$ and $d_2=0$ if $q \equiv 7\pmod {12}$. Hence,
\begin{equation*}
\JH= \left\{
\begin{array}{ll}
(q+11)/12,&   \text{ if }  q \equiv 1 \pmod {12}, \vspace{3pt}\\
(q+8)/12,&   \text{ if }  q \equiv 4 \pmod {12}, \vspace{3pt}\\
(q+5)/12, &   \text{ if }  q \equiv 7 \pmod {12}.
\end{array}
\right.
\end{equation*}

Now, we assume that $q \equiv 2 \pmod {3}$. Then, the number of third roots of 27 in $\F_q$ equals
1. If $p=2$, then Lemma~\ref{lem:Ju-Hes} shows that $N_1=1$, $N_2=q-2$ and $N_k=0$, for $k \ge 3$.
Then, using~\eqref{eq:JH-Nk}, we have $\JH=q/2$. If $p\ne 2$, then from Lemma~\ref{lem:Ju-Hes}, we
have $N_1=0$, $N_2=q-1$ and $N_k=0$, for $k \ge 3$. Next, using~\eqref{eq:JH-Nk}, we obtain
$\JH=(q-1)/2$. Hence,
\begin{equation*}
\JH= \left\{
\begin{array}{ll}
q/2,&   \text{ if }  q \equiv 2 \pmod {6}, \vspace{3pt}\\
(q-1)/2,&   \text{ if }  q \equiv 5 \pmod {6}.
\end{array}
\right.
\end{equation*}
\end{proof}

In the following theorem, we study the number of $\F_q$-isomorphism classes of Hessian curves.

\begin{theorem}
\label{thm:numi Hes} For any prime $p$, for the number $\IH$ of $\F_q$-isomorphism classes of the
family~\eqref{eq:Hes}, we have
$$
\IH= \left\{
\begin{array}{ll}
\displaystyle\fl{ \frac{q+11}{12}}&   \text{ if }  q \equiv 1 \pmod {3}, \vspace{2pt}\\
\displaystyle{\;\; {q-1}} &   \text{ if }  q \equiv 0,2 \pmod {3}.
\end{array}
\right.
$$
\end{theorem}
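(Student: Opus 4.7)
The plan is to compute $\IH$ via the standard sum
$$\IH = \sum_{\substack{u\in \F_q \\ u^3 \ne 27}} \frac{1}{\#\cI_{\HH,u}},$$
and to split into cases according to $q \bmod 3$, reusing the work already done in Lemma~\ref{lem:IuH} and Theorem~\ref{thm:numj Hes}.

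For the case $q \equiv 0,2 \pmod 3$, Lemma~\ref{lem:IuH} tells us that $\#\cI_{\HH,u} = 1$ for every admissible $u$. Hence $\IH$ reduces to counting those $u \in \F_q$ with $u^3 \ne 27$. When $p = 3$ we have $U^3 - 27 = (U-3)^3$, which has the unique root $u = 3$. When $q \equiv 2 \pmod 3$, cubing is a bijection on $\F_q$ (the order of $\F_q^*$ is coprime to $3$), so $U^3 = 27$ again has exactly one solution, namely $u = 3$. In either case the count is $q - 1$.

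For the remaining case $q \equiv 1 \pmod 3$, Lemma~\ref{lem:IuH} gives $\#\cI_{\HH,u} = \#\cJ_{\HH,u}$ for every admissible $u$, because when $\F_q$ contains all the cube roots of unity the $\F_q$-isomorphism relation already identifies everything that the geometric isomorphism relation does (this is the content of Lemma~\ref{lem:isoH}). Therefore $\IH = \JH$, and Theorem~\ref{thm:numj Hes} yields $\JH = (q+11)/12$, $(q+8)/12$, $(q+5)/12$ according to whether $q \equiv 1, 4, 7 \pmod{12}$. A quick check shows these three values coincide with $\lfloor (q+11)/12 \rfloor$, so the unified expression $\lfloor (q+11)/12 \rfloor$ works for all $q \equiv 1 \pmod 3$.

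There is essentially no obstacle here: once Lemma~\ref{lem:IuH} is in hand, the only real content is the identification $\cI_{\HH,u} = \cJ_{\HH,u}$ when $q \equiv 1 \pmod 3$, which collapses the $\F_q$-isomorphism count onto the $j$-invariant count. The main care needed is the bookkeeping of the floor function and verifying that $\lfloor (q+11)/12\rfloor$ correctly absorbs the three subcases mod $12$.
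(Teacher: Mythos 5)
Your proposal is correct and follows essentially the same route as the paper: reduce to the sum $\sum_u 1/\#\cI_{\HH,u}$, invoke Lemma~\ref{lem:IuH} to get $\#\cI_{\HH,u}=1$ when $q\equiv 0,2\pmod 3$ (so $\IH$ is just the count of admissible $u$, namely $q-1$) and $\cI_{\HH,u}=\cJ_{\HH,u}$ when $q\equiv 1\pmod 3$ (so $\IH=\JH$, read off from Theorem~\ref{thm:numj Hes}). The only difference is that you spell out why $U^3=27$ has a unique root when $q\not\equiv 1\pmod 3$ and explicitly check that $\fl{(q+11)/12}$ absorbs the three residues mod $12$, which the paper leaves implicit.
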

\begin{proof}
From Lemma~\ref{lem:IuH}, we recall that
$$
\#\cI_{\HH,u}= \left\{
\begin{array}{ll}
1,     & \text{ if }  q \equiv 0,2  \pmod  3,\\
\#\cJ_{\HH,u}, & \text{ if }  q \equiv 1 \pmod  3.
\end{array}
\right.
$$
Since $\#\IH=\sum_{u \in \F_q, u^3\ne 27 } \frac{1}{\#\cI_{\HH,u}}$, if $q \equiv 0,2  \pmod  3$,
we see $\#\IH=q-1$. Moreover, if $q \equiv 0,2  \pmod  3$,  Theorem~\ref{thm:numj Hes} gives the
cardinality of $\IH$. Therefore, we have
$$
\IH= \left\{
\begin{array}{ll}
(q+11)/12,&   \text{ if }  q \equiv 1 \pmod {12}, \vspace{2pt}\\
(q+8)/12,&   \text{ if }  q \equiv 4 \pmod {12}, \vspace{2pt}\\
(q+5)/12,&   \text{ if }  q \equiv 7 \pmod {12}, \vspace{2pt}\\
q-1, &   \text{ if }  q \equiv 0,2 \pmod {3}.
\end{array}
\right.
$$
%
\end{proof}

\section{Generalized Hessian curves}

We consider the \emph{generalized Hessian} curves $\E_{\GH,u,v}$ given by~\eqref{eq:GH} over a
finite field $\F_q$ of characteristic~$p$. Obviously, a Hessian curve $\E_{\HH,u}$ given
by~\eqref{eq:Hes} is a generalized Hessian curve $\E_{\GH,u,v}$ with $v=1$. Moreover, the curve
$\E_{\GH,u,v}$ over $\F_q$, via the map $(X,Y) \mapsto (\wX,\wY)$ defined by
\begin{equation}\label{eq:G-H}
  \wX=X/\zeta  \mand \wY=Y/ \zeta
\end{equation}
with $\zeta^3=v$, is isomorphic over $\overline{\F}_q$ to the Hessian curve
$\E_{\HH,\frac{u}{\zeta}}~:~ \wX^3+\wY^3+1=\frac{u}{\zeta} \wX \wY.$ Therefore, for the
$j$-invariant of $\E_{\GH,u,v}$, we have
\begin{equation}\label{eq:jGH}
  j(\E_{\GH,u,v})=j(\E_{\HH,\frac{u}{\zeta}})=
  \frac{1}{v}\left(\frac{u(u^3+216v)}{u^3-27v}\right)^3\enspace.
\end{equation}

%

%
%
%
%

%

For a fixed element $v$ in $\F_q$, with $v\ne 0$, we let
\begin{equation*}
  \JGHv =\set{j \mid j=j(\E_{\GH,u,v}),\ u\in \F_q, u^3\ne 27v },
\end{equation*}
and let $\SJGH=\bigcup_{v\in \F^*_q} \JGHv\ $. Clearly, we have $\JGH=\#\SJGH$.

\begin{lemma}\label{lem:jinv}
  Let $v_1, v_2 \in \F^*_q$ and let $v=v_1/v_2$. If $v$ is a cube in
  $\F_q$, then we have $\cJ_{\mathrm{GH}, v_1}= \cJ_{\mathrm{GH}, v_2}$, otherwise
  we have $\cJ_{\mathrm{GH}, v_1} \cap \cJ_{\mathrm{GH}, v_2} =
  \set{0}$.
\end{lemma}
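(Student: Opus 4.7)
My plan is to work directly from the closed-form expression \eqref{eq:jGH} for the $j$-invariant of a generalized Hessian curve and study how this expression transforms under rescaling of the parameter $u$.

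For the first implication, suppose $v_1/v_2=\eta^3$ with $\eta\in\F_q^*$. I would substitute $u_1=\eta u_2$ into \eqref{eq:jGH} and check that every occurrence of $\eta$ factors out of the rational expression: the numerator $u_1(u_1^3+216 v_1)=\eta^4 u_2(u_2^3+216 v_2)$, the denominator $u_1^3-27 v_1=\eta^3(u_2^3-27 v_2)$, and the prefactor $1/v_1=\eta^{-3}/v_2$. Cubing the ratio of numerator to denominator and multiplying by $1/v_1$ shows $j(\E_{\GH,\eta u_2,v_1})=j(\E_{\GH,u_2,v_2})$. Since $u_2\mapsto\eta u_2$ is a bijection on $\F_q$ that preserves the nondegeneracy condition $u^3\ne 27v$, this yields $\cJ_{\mathrm{GH},v_2}\subseteq\cJ_{\mathrm{GH},v_1}$, and equality follows by symmetry.

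For the converse, write $\alpha(u,v)=u(u^3+216v)/(u^3-27v)$ so that $j(\E_{\GH,u,v})=\alpha(u,v)^3/v$. If a nonzero value $j$ lies in $\cJ_{\mathrm{GH},v_1}\cap\cJ_{\mathrm{GH},v_2}$, realized by parameters $u_1$ and $u_2$ respectively, then $\alpha(u_i,v_i)\ne 0$ and the equality $\alpha(u_1,v_1)^3/v_1=\alpha(u_2,v_2)^3/v_2$ rearranges to
\[
v_1/v_2=\bigl(\alpha(u_1,v_1)/\alpha(u_2,v_2)\bigr)^3,
\]
exhibiting $v_1/v_2$ as a cube in $\F_q^*$. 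Contrapositively, if $v_1/v_2$ is not a cube, every element of $\cJ_{\mathrm{GH},v_1}\cap\cJ_{\mathrm{GH},v_2}$ must equal $0$.

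Finally, I would verify that $0$ actually belongs to $\cJ_{\mathrm{GH},v_1}\cap\cJ_{\mathrm{GH},v_2}$ whenever the ``otherwise'' case can occur. The existence of a noncube in $\F_q^*$ forces $q\equiv 1\pmod 3$, and in particular $p\ne 3$, so taking $u=0$ satisfies $u^3\ne 27v$ (because $v\ne 0$) and yields $j(\E_{\GH,0,v})=0$ by \eqref{eq:jGH}. Hence $0\in\cJ_{\mathrm{GH},v}$ for every $v\in\F_q^*$, and the intersection equals $\{0\}$ exactly. The only mild obstacle is the bookkeeping around $j=0$, since the algebraic identity $v_1/v_2=(\alpha_1/\alpha_2)^3$ degenerates when some $\alpha(u_i,v_i)=0$; this is handled by the case split above together with the observation that the relevant case is automatically in characteristic different from $3$.
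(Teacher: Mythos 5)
Your proof is correct and follows essentially the same route as the paper's: rescaling $u$ by a cube root of $v_1/v_2$ to get $\cJ_{\mathrm{GH},v_1}=\cJ_{\mathrm{GH},v_2}$ in the first case, and extracting $v_1/v_2$ as a cube from the equality of two nonzero $j$-invariants in the second. You are a bit more careful than the paper in verifying that $0$ actually lies in both sets (via $q\equiv 1\pmod 3$, hence $p\ne 3$, hence $u=0$ is admissible), but this is a refinement of the same argument rather than a different approach.
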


\begin{proof}
  Suppose $v=\zeta^3$ is a cube in $\F_q$. For all $u\in
  \F_q$ with $u^3\ne 27v$, we have
  $j(\E_{\GH,u,v_1})=j(\E_{\GH,u/\zeta,v_2})$ and similarly
  $j(\E_{\GH,u,v_2})=j(\E_{\GH,\zeta u,v_1})$. Therefore, $\cJ_{\mathrm{GH}, v_1}= \cJ_{\mathrm{GH}, v_2}$.

  Next, suppose that $v$ is not a cube in $\F_q$. Let $j\in \cJ_{\mathrm{GH}, v_1} \cap \cJ_{\mathrm{GH}, v_2}$. Then,
  $$j=\frac{1}{v_1}\left(\frac{u_1({u_1}^3+216v_1)}{{u_1}^3-27v_1}\right)^3
  = \frac{1}{v_2}\left(\frac{u_2({u_2}^3+216v_2)}{{u_2}^3-27v_2}\right)^3,$$
  for some $u_1,u_2\in \F_q$. If $j\ne 0$, we see that $v=v_1/v_2$ is
  a cube in $\F_q$, which is a contradiction. So, $\cJ_{\mathrm{GH}, v_1} \cap \cJ_{\mathrm{GH}, v_2} = \set{0}$.
\end{proof}

\begin{lemma}\label{lem:JHv}
  For $q \equiv 1 \pmod 3$, if $v$ is not a cube in $\F_q$, we
  have $\#\JGHv=(q+2)/3$.
\end{lemma}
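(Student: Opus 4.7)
The plan is to show that on $\F_q$ the map $u \mapsto j(\E_{\GH,u,v})$ has exactly $(q+2)/3$ distinct values by identifying its fibers with the orbits of the action $u\mapsto\omega u$, where $\omega\in\F_q$ is a primitive cube root of unity (present because $3\mid q-1$, which also forces $p\ne 3$). Since $v$ is not a cube in $\F_q$ and $27$ is, no $u\in\F_q$ satisfies $u^3=27v$, so writing
$$F_v(u):=\frac{u(u^3+216v)}{u^3-27v},$$
formula~\eqref{eq:jGH} gives $j(\E_{\GH,u,v})=F_v(u)^3/v$ for every $u\in\F_q$. A direct computation yields $F_v(\omega u)=\omega F_v(u)$, so each orbit $\{u,\omega u,\omega^2 u\}$ produces a single $j$-invariant, giving an immediate upper bound of $(q-1)/3+1=(q+2)/3$. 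Equality will follow once I prove that $F_v$ is injective on $\F_q$.

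To establish injectivity, I fix $\zeta\in\F_{q^3}$ with $\zeta^3=v$; as $v$ is a non-cube, $[\F_q(\zeta):\F_q]=3$ and $\{1,\zeta,\zeta^2\}$ is an $\F_q$-basis of $\F_q(\zeta)$. The identity $F_v(u)=\zeta\,F(u/\zeta)$, with $F$ the Hessian map from~\eqref{eq:map Hes}, reduces the problem to showing that $F(u_1/\zeta)=F(u_2/\zeta)$ with $u_1,u_2\in\F_q$ forces $u_1=u_2$. By~\eqref{eq:H-guv}, this equality is equivalent to the vanishing at $(u_1/\zeta,u_2/\zeta)$ of at least one of the factors $(U-V)$, $(UV-3U-3V-18)$, or $h(U,V)$.

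The first factor directly gives $u_1=u_2$. For the second, clearing $\zeta^2$ yields $u_1u_2-3\zeta(u_1+u_2)-18\zeta^2=0$, and expanding on the basis $\{1,\zeta,\zeta^2\}$ forces $u_1u_2=0$, $3(u_1+u_2)=0$, $18=0$, which is inconsistent when $p\ne 2,3$ and collapses to $u_1=u_2=0$ in characteristic $2$. For the third factor, I will multiply $h(u_1/\zeta,u_2/\zeta)$ by $\zeta^4=v\zeta$ and regroup by powers of $\zeta$; setting $s=u_1+u_2$ and $p=u_1u_2$, the $\zeta^1$- and $\zeta^2$-coefficients will give $p=-s^2/2$ and $ps=-108v$, whence $s^3=216v=6^3v$. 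Since $s\in\F_q$, this would force $s/6$ to be a cube root of $v$ in $\F_q$, contradicting the hypothesis (the degenerate case $s=0$ forces $v=0$, also excluded). In characteristic $2$ the same three coefficients simplify to $u_1^2u_2^2$, $u_1u_2(u_1+u_2)$, $(u_1+u_2)^2$, whose simultaneous vanishing again gives $u_1=u_2=0$.

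With $F_v$ injective, the equality $F_v(u_1)^3=F_v(u_2)^3$ becomes $F_v(u_1)=\omega^iF_v(u_2)=F_v(\omega^iu_2)$ for some $i\in\{0,1,2\}$, hence $u_1=\omega^iu_2$ by injectivity, so the fibers of the $j$-invariant map on $\F_q$ are exactly the orbits $\{u,\omega u,\omega^2 u\}$ and $\#\JGHv=(q+2)/3$. The main obstacle will be the third-factor computation: extracting the identity $s^3=216v$ cleanly from three independent $\F_q$-coefficients, and then leveraging the non-cube hypothesis on $v$ to derive the contradiction, while handling characteristic $2$ in parallel as several constants collapse to zero.
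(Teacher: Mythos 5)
Your proposal is correct and follows essentially the same route as the paper: both rest on showing that $F_v(U)=U(U^3+216v)/(U^3-27v)$ is injective on $\F_q$ because the non-identity factors of $F_v(U)-F_v(V)$ involve the cube roots of $v$, which do not lie in $\F_q$, and then counting via the $3{:}1$ cubing map (equivalently, the orbits of $u\mapsto\omega u$). The only difference is that you work out in full the coefficient computations in the basis $\set{1,\zeta,\zeta^2}$ that the paper's proof leaves implicit when it asserts injectivity.
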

\begin{proof}
  For $u\in \F_q$ with $u^3\ne 27v$, we let $j(\E_{\GH,u,v}) =
  \frac{1}{v}\ (F(u))^3$ where $F(U)=\frac{U(U^3+216 v)}{U^3-27 v}$.
  We consider the bivariate rational function $F(U)-F(V)$. We obtain
$$ F(U) - F(V)=\frac{U-V}{U^3-27v}\ \prod_{i=1}^3 \left(U-\frac{3\zeta_i(V+6\zeta_i)}{V-3\zeta_i}
\right)\thinspace, $$
  where, $\zeta_1,\zeta_2,\zeta_3$ are three cubic roots of $v$ in
  $\overline{\F}_q$. For all $u_1, u_2 \in \F_q$ with $u_1^3\ne 27v$, $u_2^3\ne
  27v$, we see that $F(u_1)=F(u_2)$ if and only if $u_1=u_2$. Hence, $F$ is
  an injective map over $\F_q$ and we have $F(\F_q)=\F_q$. Now, consider the
  map $\kappa: \F_q^* \rightarrow \F_q^*$ by
  $\kappa(x)=\frac{1}{v}x^3$. This map is $3:1$, if $q \equiv 1
  \pmod 3$. Therefore, $\#\JGHv=(q-1)/3+1$.
\end{proof}

\begin{theorem}\label{thm:numj GHes}
For any prime $p$, for the number $\JGH$ of distinct values of the $j$-invariant of the
family~\eqref{eq:GH}, we have
$$
  \JGH=
  \begin{cases}
    q-1, &   \text{if $q \equiv 0 \pmod {3}$}\\
    \fl{ (3q+1)/4},&   \text{if $q \equiv 1 \pmod {3}$}\\
    \fl{q/2}, &   \text{if $q \equiv 2 \pmod {3}$}
  \end{cases}\enspace.
$$
\end{theorem}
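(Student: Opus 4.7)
The plan is to decompose $\SJGH$ according to the cosets of the subgroup of nonzero cubes in $\F_q^*$, invoking Lemma~\ref{lem:jinv}: it asserts that $\cJ_{\mathrm{GH},v_1}=\cJ_{\mathrm{GH},v_2}$ whenever $v_1/v_2$ is a cube, and that these two sets meet only in $\set{0}$ otherwise. Thus $\SJGH$ is determined by a single representative of each cube coset, and everything reduces to counting the pieces $\cJ_{\mathrm{GH},v}$ for those representatives and then running inclusion--exclusion.

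If $q\equiv 0$ or $2\pmod 3$, cubing is a bijection on $\F_q^*$, so there is a single coset and $\SJGH=\cJ_{\mathrm{GH},1}$. Via the isomorphism~\eqref{eq:G-H} this coincides with the set of $j$-invariants of the Hessian family~\eqref{eq:Hes}, so $\JGH=\JH$. Theorem~\ref{thm:numj Hes} then delivers $q-1$ when $q\equiv 0\pmod 3$, and, when $q\equiv 2\pmod 3$, the two sub-values $q/2$ (if $p=2$) and $(q-1)/2$ (if $p>2$), which together read as $\fl{q/2}$.

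If $q\equiv 1\pmod 3$, the cube subgroup has index three in $\F_q^*$. Picking representatives $v_1,v_2$ of the two non-trivial cosets gives
$$\SJGH = \cJ_{\mathrm{GH},1}\cup\cJ_{\mathrm{GH},v_1}\cup\cJ_{\mathrm{GH},v_2}.$$
Since $j(\E_{\GH,0,v})=0$ for every admissible $v$, the value $0$ lies in all three pieces, while Lemma~\ref{lem:jinv} tells us the pairwise (and hence triple) intersections are exactly $\set{0}$. Inclusion--exclusion therefore yields
$$\JGH = \#\cJ_{\mathrm{GH},1}+\#\cJ_{\mathrm{GH},v_1}+\#\cJ_{\mathrm{GH},v_2}-2.$$
Substituting $\#\cJ_{\mathrm{GH},v_i}=(q+2)/3$ from Lemma~\ref{lem:JHv} and $\#\cJ_{\mathrm{GH},1}=\JH$ from Theorem~\ref{thm:numj Hes} reduces the count to $\JH+(2q-2)/3$. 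Splitting into the sub-cases $q\equiv 1,4,7\pmod{12}$ produces the exact values $(3q+1)/4$, $3q/4$, and $(3q-1)/4$, which collapse into the single expression $\fl{(3q+1)/4}$.

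The main obstacle is modest and essentially bookkeeping: one must be careful that the shared element $0$ is counted exactly once in the inclusion--exclusion (a direct consequence of Lemma~\ref{lem:jinv}), and then verify that the three residue-class formulas in the $q\equiv 1\pmod 3$ case really do agree with the claimed floor. The structural input --- the cube-coset dichotomy and the count $(q+2)/3$ on each non-cube coset --- is already provided by Lemmas~\ref{lem:jinv} and~\ref{lem:JHv}.
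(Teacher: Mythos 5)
Your proposal is correct and follows essentially the same route as the paper: split $\SJGH$ over the three cube cosets (or the single coset when $q\not\equiv 1\pmod 3$), apply Lemma~\ref{lem:jinv} for the pairwise intersections $\set{0}$, Lemma~\ref{lem:JHv} for the two non-cube cosets, and Theorem~\ref{thm:numj Hes} for $\cJ_{\GH,1}$, then combine by inclusion--exclusion. The arithmetic checks out in all residue classes modulo $12$.
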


\begin{proof}
  If $q \not\equiv 1 \pmod 3$, every element of $\F_q$ is a cube in
  $\F_q$. Next, Lemma~\ref{lem:jinv} implies that, for all $v\in
  \F^*_q$, we have $\JGHv=\cJ_{\GH, 1}$. Therefore, $\JGH=\#\cJ_{\GH,1}$. Then,
  from Theorem \ref{thm:numj Hes}, we have
$$
  \JGH=
  \begin{cases}
    q-1, &   \text{if $q \equiv 0 \pmod {3}$}\\
    \fl{q/2}, & \text{if $q \equiv 2 \pmod {3}$}
  \end{cases}\enspace.
$$
  For $q \equiv 1 \pmod 3$, we fix a value $v \in \F_q$ that is not a
  cube in $\F_q$. Following Lemma~\ref{lem:jinv}, we write
  $\SJGH=\JGHv \cup \cJ_{\GH, v^2} \cup \cJ_{\GH, 1} $, where $\JGHv
  \cap \cJ_{\GH, v^2} = \JGHv \cap \cJ_{\GH, 1} = \cJ_{\GH, v^2} \cap
  \cJ_{\GH, 1}=\set{0}$. By Lemma~\ref{lem:JHv}, we have $\#\JGHv=
  \#\cJ_{\GH, v^2}=(q+2)/3$.  Moreover, from Theorem \ref{thm:numj Hes}, we
  have
$$
  \# \cJ_{\GH, 1}=
  \begin{cases}
    (q+11)/{12},& \text{if $q \equiv 1 \pmod {12}$}\\
    (q+8)/{12},&   \text{if $q \equiv 4 \pmod {12}$}\\
    (q+5)/{12}, &   \text{if $q \equiv 7 \pmod {12}$}
  \end{cases}\enspace.
$$
  Therefore, we have\vspace{-\baselineskip}
$$
  \JGH=
  \begin{cases}
    (3q+1)/{4},&   \text{if $q \equiv 1 \pmod {12}$}\\
    3q/{4},&   \text{if $q \equiv 4 \pmod {12}$}\\
    (3q-1)/{4}, &   \text{if $q \equiv 7 \pmod {12}$}
  \end{cases}\thinspace,
$$
  which completes the proof.
\end{proof}


We recall from Theorem \ref{thm:numi Hes} that the number of $\F_q$-isomorphism classes of Hessian
curves over~$\F_q$ is $\fl{(q+11)/12}$ if $q \equiv 1 \pmod {3}$ and $q-1$ if $q \not\equiv 1 \pmod
{3}$. The following theorem gives explicit formulas for the number of distinct generalized Hessian
curves, up to $\F_q$-isomorphism, over the finite field $\F_q$.

\begin{theorem}\label{thm:numi GHes}
For any prime $p$, for the number $\IGH$ of $\F_q$-isomorphism classes of the family~\eqref{eq:GH},
we have
$$
  \begin{cases}
    \fl{(3(q+3)/{4}},& \text{if $q \equiv 1 \pmod {3}$}\\
    \;\;{q-1}, & \text{if $q \equiv 0,2 \pmod {3}$}
  \end{cases}\enspace.
$$
\end{theorem}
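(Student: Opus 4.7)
The plan is to reduce to the Hessian case (Theorem~\ref{thm:numi Hes}) by exploiting the scaling isomorphism $(X,Y) \mapsto (X/s, Y/s)$, which sends $\E_{\GH,u,v}$ to $\E_{\GH,u/s,\,v/s^3}$ and is defined over $\F_q$ for every $s \in \F_q^*$. Thus, up to $\F_q$-isomorphism, one may replace $v$ by any representative of its coset in $\F_q^*/(\F_q^*)^3$. When $q \not\equiv 1 \pmod 3$ every $v \in \F_q^*$ is a cube, so writing $v = s^3$ gives $\E_{\GH,u,v} \cong_{\F_q} \E_{\HH,u/s}$; hence every generalized Hessian curve is $\F_q$-isomorphic to a Hessian curve, and Theorem~\ref{thm:numi Hes} yields $\IGH = \IH = q-1$.

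For $q \equiv 1 \pmod 3$, fix a non-cube $v_0 \in \F_q^*$ so that $\{1, v_0, v_0^2\}$ represents the three cube classes. Let $\cI_i$ denote the set of $\F_q$-isomorphism classes that can be represented with parameter $v = v_0^i$, for $i = 0, 1, 2$. I would first compute $|\cI_i|$ for $i = 1, 2$: since $27$ is a cube and $v_0^i$ is not, the constraint $u^3 \ne 27 v_0^i$ excludes no element of $\F_q$, so $u$ ranges over all of $\F_q$. Within this set, the only scaling isomorphism preserving $v_0^i$ requires $s^3 = 1$, giving the action of the group of cube roots of unity on $u$; this action has $(q-1)/3$ free orbits and one fixed point $u = 0$, for a total of $(q+2)/3$ orbits. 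One then verifies (by passing to short Weierstrass form $y^2 = x^3 + A(u,v)x + B(u,v)$ with $A = -u(u^3+216v)/3$, $B = (u^6 - 540u^3 v - 18^3 v^2)/27$ when $p > 3$, and to Artin--Schreier form when $p = 2$) that no further $\F_q$-isomorphisms arise among these curves, so $|\cI_1| = |\cI_2| = (q+2)/3$. For $i = 0$, the curves are precisely the Hessian curves, so $|\cI_0| = \IH$.

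The main obstacle is proving that $\cI_0, \cI_1, \cI_2$ are pairwise disjoint. If $\E_{\GH,u_1,v_1} \cong_{\F_q} \E_{\GH,u_2,v_2}$ with distinct $v_1, v_2 \in \{1, v_0, v_0^2\}$, then the curves share a $j$-invariant, so by Lemma~\ref{lem:jinv} this $j$-invariant must equal $0$. The $j = 0$ case is delicate because such curves have $\text{Aut} \cong \Z/6\Z$ (when $p > 3$) and admit six $\overline{\F}_q$-twists parametrized by $\F_q^*/(\F_q^*)^6$; I would handle it by working with the Weierstrass form $y^2 = x^3 + B$ (forced by $A = 0$) and checking that the twist parameter $B(u_1,v_1)/B(u_2,v_2)$ cannot be a sixth power in $\F_q^*$ when $v_1/v_2$ is not a cube. (In characteristic $2$ and $3$ the automorphism group is larger, but the argument is parallel after selecting the appropriate normal form.) Granted disjointness, the final tally is $\IGH = \IH + 2(q+2)/3$; substituting the three values of $\IH$ from Theorem~\ref{thm:numi Hes} for $q \equiv 1, 4, 7 \pmod{12}$ yields $(9q+27)/12$, $(9q+24)/12$, $(9q+21)/12$ respectively, and in each residue class a direct check confirms this equals $\fl{3(q+3)/4}$, completing the proof.
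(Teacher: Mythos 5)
Your proposal is correct, and the final tally $\IGH=\IH+2(q+2)/3$ agrees with the paper's answer (indeed $\IH+2(q+2)/3=\JGH+2$, which is what the paper computes), but the route is genuinely different. The paper first counts $j$-invariants (Theorem~\ref{thm:numj GHes}) and then converts to $\F_q$-classes by a twisting argument: since the group of rational points of a generalized Hessian curve has order divisible by $3$ while $\#E(\F_q)+\#E_t(\F_q)=2q+2\not\equiv 0\pmod 3$ when $q\equiv 1\pmod 3$, a curve and its nontrivial quadratic twist cannot both lie in the family, so each nonzero $j\in\SJGH$ carries exactly one class, $j=0$ carries three, and $\IGH=\JGH+2$. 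You instead partition by the cube class of $v$ and count orbits directly. Your count $(q+2)/3$ for a non-cube class is right, and in fact the "further verification" you propose via Weierstrass forms is unnecessary: the injectivity of $F$ for non-cube $v$ (the paper's Lemma~\ref{lem:JHv}) shows distinct orbits of $u\mapsto\zeta u$ have distinct $j$-invariants, so the orbit upper bound meets the $j$-invariant lower bound and every orbit is exactly one $\F_q$-class. A pleasant side effect is that $j=1728$ needs no special treatment in your argument, whereas the paper must assert $\#i_{\GH}(1728)=1$ separately (nontrivial, since such curves admit quartic twists). The one place where you carry a comparable burden to the paper is the $j=0$ disjointness: your sixth-power computation works for $p>3$ (with $u_1=u_2=0$ one gets $B(0,v_1)/B(0,v_2)=(v_1/v_2)^2$, a sixth power iff $v_1/v_2$ is a cube, since $-1$ is a cube), but the case $p=2$, $q\equiv 1\pmod 3$ still requires the Artin--Schreier normal form and is only sketched --- which is exactly the level of detail the paper itself offers ("one can show that $\#i_{\GH}(a)=3$ if $a=0$"). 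Your approach is more elementary and self-contained in that it avoids invoking the $3$-divisibility of the group order from \cite{FJ}; the paper's is shorter once $\JGH$ is in hand.
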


\begin{proof}  If $q \equiv 0,2 \pmod 3$, then every generalized Hessian curve is
  $\F_q$-isomorphic to a Hessian curve via the map given by Equations
  \eqref{eq:G-H}. So, $\IGH$ equals the number of $\F_q$-isomorphism
  classes of the family of Hessian curves over~$\F_q$. Then, from
  Theorem \ref{thm:numi Hes}, we have $\IGH=q-1$ if $q \not\equiv 1
  \pmod 3$.

  Next, suppose that $q \equiv 1 \pmod 3$. For $a\in \F_q$, let
  $i_{\GH}(a)$ be the set of $\F_q$\nobreakdash-isomorphism classes of
  generalized Hessian curves $\GH_{u,v}$ with $j(\E_{\GH,u,v})=a$. So,
  $\#i_{\GH}(a)$ is the number of distinct generalized Hessian curves with
  $j$-invariant $a$ that are twists of each other. Clearly,
  $\#i_{\GH}(a)=0$, if $a \not\in \SJGH$. We note that, for all elliptic
  curve $E$ over $\F_q$, we have $\#E(\F_q)+\#E_t(\F_q)=2q+2$, where
  $E_t$ is the nontrivial quadratic twist of $E$. We also recall that
  the order of the group of $\F_q$-rational points of a generalized
  Hessian curve is divisible by $3$ (see \cite[Theorem 2]{FJ}).
  Since $q \equiv 1 \pmod 3$, if the isomorphism class of $\E_{\GH, u,v}$
  is in $i_{\GH}(a)$ then the isomorphism class of the nontrivial
  quadratic twist of $\E_{\GH, u,v}$ is not in $i_{\GH}(a)$. So, $\#i_{\GH}(a)=1$
  if $a\in \SJGH$ and $a\ne 0,1728$. Moreover, one can show that
  $\#i_{\GH}(a)=3$ if $a=0$ and $\#i_{\GH}(a)=1$ if $a=1728$, $a\ne 0$ and $a
  \in \SJGH$. Therefore, we have
$$
  \IGH=\sum_{a\in \F_q} i_{\GH}(a)=\sum_{a\in \SJGH} i_{\GH}(a)=2+ \sum_{a\in
    \SJGH} 1=2+\JGH\enspace.
$$
  From the proof of Theorem~\ref{thm:numj GHes}, we have
$$
  \IGH=
  \begin{cases}
    (3q+9)/{4},&   \text{if $q \equiv 1 \pmod {12}$}\\
    (3q+8)/{4},&   \text{if $q \equiv 4 \pmod {12}$} \\
    (3q+7)/{4}, &   \text{if $q \equiv 7 \pmod {12}$}
  \end{cases}\thinspace,
$$
  which completes the proof.
\end{proof}

\section{Comments and Open Questions}
There are also several more interesting families of Elliptic curves such as Montgomery curves
~\cite{Mo}, Edwards curves \cite{Edw}, and its variants \cite{BerLan1,BBLP,BLRF}. The numbers of
distinct $j$-invariants of the families of Edwards curves in \cite{Edw,BerLan1} have been studied
in~\cite[Theorms 3 and 5]{FS}. Moreover, the explicit formulas for the numbers of
$\F_q$-isomorphism classes of these families are given in ~\cite[Theorems 5,6 and 8]{Fa}. The
proofs of the latter Theorems can be provided by our method. However, we refer to ~\cite{FMW} for
the other proofs via different techniques.

For future work, we plan to study the exact formulas for the number of distinct elliptic curves $E$
over $\F_q$, where $E(\F_q)$ has a specific small subgroup. In particular, we give the explicit
formulas for the number of elliptic curves $E$ over $\F_q$ with a point of small order $n$.

\vspace{5mm}

\noindent \textbf{Acknowledgment.} The author would like to thank Igor Shparlinski for his interest
and support of this work. 

\vspace{-8pt}


\begin{thebibliography}{100}

\bibitem{ACDFLNV}
R. Avanzi, H.~Cohen, C.~Doche, G.~Frey, T.~Lange, K.~Nguyen and F.~Vercauteren, {\it Handbook of
elliptic and hyperelliptic curve cryptography\/}, CRC Press, 2005.

\bibitem{BBLP}
D.~J. Bernstein, M.~Joye, P.~Birkner, T.~Lange and C.~Peters, `Twisted Edwards curves', {\it
Africacrypt'2008\/}, LNCS, vol.~5023, Springer-Verlag, 2008, 389--405.

\bibitem{BKL}
D.~J. Bersntein, D.~Kohel, and T.~Lange, `Twisted {H}essian curves', \texttt{
http://www.hyperelliptic.org/EFD/g1p/auto-twistedhessian.html}.

\bibitem{BerLan1}
D.~J. Bernstein and T.~Lange, `Faster addition and doubling on elliptic curves', {\it
Asiacrypt'2007\/}, LNCS, vol.~4833, Springer-Verlag, 2007,  29--50.

\bibitem{BLRF}
D.~J. Bernstein, T.~Lange and R.~R.~Farashahi, `Binary Edwards curves', {\it CHES'2008\/}, LNCS,
vol.~5154, Springer-Verlag, 2008, 244--265.

\bibitem{BiJo}
O.~Billet and M.~Joye, `The Jacobi model of an elliptic curve and side-channel analysis', {\it
AAECC'2003\/}, LNCS, vol.~2643 Springer-Verlag, 2003, 34--42.

\bibitem{Cas}
J.~W.~S.~Cassels, `Lectures on elliptic curves', Cambridge University Press, 1991.

\bibitem{CaHu}  W.~Castryck and H.~Hubrechts, `The distribution of the number of points modulo an integer on
elliptic curves over finite fields', {\it Preprint\/}, 2009.

\bibitem{DIK}
C.~Doche, T.~Icart and D.~R. Kohel, `Efficient scalar multiplication
  by isogeny decompositions', {\it PKC'2006\/}, LNCS, vol.~3958, Springer-Verlag, 2006, 191--206.

\bibitem{Du}
S.~Duquesne, `Improving the arithmetic of elliptic curves in the Jacobi model', {\it Information
Processing Letters\/}, \textbf{104}(3), (2007), 101--105.

\bibitem{Edw}
H.~M. Edwards, `A normal form  for  elliptic curves', {\it Bull. Amer. Math. Soc.\/}, \textbf{44}
(2007), 393--422,

\bibitem{Fa}
R.~R.~Farashahi, `On the Number of Distinct Legendre, Jacobi, Hessian and Edwards Curves (Extended
Abstract)', Workshop on Coding theory and Cryptology---WCC 2011, (2011), 37--46.

\bibitem{FJ}
R.~R.~Farashahi and M.~Joye, `Efficient arithmetic on Hessian curves', {\it PKC'2010\/}, LNCS, vol.
6056, Springer-Verlag, 2010, 243--260.

\bibitem{FMW}
R.~R.~Farashahi,D.~Moody and Hongfeng WU, `Isomorphism classes of Edwards and twisted Edwards
curves over finite fields', 2011, submitted.

\bibitem{FS}
R.~R.~Farashahi and I.~E.~Shparlinski, `On the number of distinct elliptic curves in some
families', {\it Designs, Codes and Cryptography\/}, \textbf{54}(1), (2010), 83--99.

\bibitem{Ho}
E.~Howe, `On the group orders of elliptic curves over finite fields', {\it Compositio Mathematica},
\textbf{85}(2), (1993), 229--247.

\bibitem{HWCD2}
H.~Hisil, K.~K.-H~Wong, G.~Carter and E.~Dawson, `Faster group operations on elliptic curves', {\it
Seventh Australasian Information Security Conference - AISC'2009'\/}, vol.~98, 7–-19.

\bibitem{JoQu}
M.~Joye and J.-J~Quisquater, `Hessian elliptic curves and side-channel attacks', {\it CHES'2001\/},
LNCS, vol.~2162, Springer-Verlag, 2001, 402--410.

\bibitem{Ko}
N.~Koblitz, `Elliptic curve cryptosystems', {\it Math. Comp.}, \textbf{48}(177), (1987), 203--209.

\bibitem{Le}
H.~W.~Lenstra, `Factoring integers with elliptic curves', {\it Annals of Mathematics}
\textbf{126}(3), (1987), 649--673.

\bibitem{LiSm}
P.-Y.~Liardet and N.~P.~Smart, `Preventing SPA/DPA in ECC systems using the Jacobi form', {\it
CHES'2001\/}, LNCS, vol.~2162, Springer-Verlag, 2001, 391--401.

\bibitem{Mi}
V.~S. Miller, `Use of elliptic curves in cryptography', {\it Advances in Cryptology -- Crypto
1985},
 LNCS, vol. 218, Springer-Verlag, 1986, 417--426.

\bibitem{Mo}
P.~L.~Montgomery, `Speeding the Pollard and elliptic curve methods of factorization', {\it
Mathematics of Computation}, \textbf{48}(177), (1987), 243--264

\bibitem{Silv}
J. H. Silverman, {\it The arithmetic of elliptic curves\/}, Springer-Verlag, Berlin, 1995.

\bibitem{Sma}
N.~P.~Smart, `The Hessian form of an elliptic curve', {\it CHES'2001\/}, LNCS, vol.~2162,
Springer-Verlag, 2001, 118–-125.

\bibitem{Wash}
L.~C. Washington, {\it Elliptic curves: Number theory  and Cryptography\/},  CRC Press, 2008.

\end{thebibliography}
\end {document}